\documentclass[a4paper,11pt]{article}
\usepackage{amssymb, epsf, lscape, color, colortbl, subfigure, epsfig, setspace,graphicx}
\usepackage{amsmath, amsthm, fancybox, float, amsfonts,url}
\usepackage{xcolor, soul}

\newtheorem{lemma}{Lemma}

\newtheorem{prop}{Proposition}
\newtheorem{thrm}{Theorem}

\newcommand{\Diag}{\mathrm{Diag} }

\newcommand{\dst}{distribution }

\newcommand{\flw}{following }

\begin{document}

\author{Michael Grabchak\footnote{Email: mgrabcha@charlotte.edu}, Xingjie Li, and Isaac M. Sonin \\
University of North Carolina Charlotte}
\title{The Test and Find Model}
\date{\today}
\maketitle

\begin{abstract}
We introduce the Test and Find (TF) problem, where a decision maker (DM) faces the \flw situation: $k$ identical objects are randomly allocated to $n$ distinct boxes (sites) according to some distribution $\pi$, with no more than one object to a box. The DM tests all of the boxes. However, the tests are imperfect: they can give false positive or false negative results. DM has $m$ tags, $1\leq m\leq n$, and, after testing all boxes, she can place a tag on any box that she thinks has a hidden object. She is rewarded $c_i$ for a correct guess and penalized $d_i$ for a wrong guess in box $i$. DM knows all of the parameters of the model and her goal is to maximize the \emph{expected reward}. We give an explicit solution to this problem. We then turn to the symmetric case, for which we derive more computationally efficient results. We also consider several extensions of the TF model and give detailed solutions. One of these extensions is to the realistic case where $k$, the number of objects, is unknown and random.  \\
	
\noindent\textbf{Keywords}: search, sensitivity-specificity, testing\\
	\textbf{AMS 2020 subject classification:} 91A27, 90B40
\end{abstract}

\section{Introduction}\label{sec: INT}

We introduce the following problem: $k$ identical objects are randomly allocated to $n$ distinct boxes (or sites) according to some \dst $\pi$, no more than one object to a box. To begin with, we assume that $k$ is known and fixed, but later we extend to the case where it may be random. A decision maker (DM) tests all of the boxes. However, the tests are imperfect: they can give false positive or false negative results. We assume that tests are performed independently for each box. The DM has $m$ tags, $1\leq m\leq n$, and, after testing all boxes, she can place a tag on any box that she thinks has a hidden object. She is allowed to use as many or as few of the tags as she likes. She is rewarded $c_i$ for a correct guess and penalized $d_i$ for a wrong guess in box $i$. The DM knows all of the parameters of the model and her goal is to maximize the \emph{expected reward}. We call this the Test and Find (TF) problem. A few potential applications are as follows:
\begin{itemize}
\item A person has $k\ge1$ unknown diseases from a set of $n$ possible diseases. We give cheap (but not very effective) tests for each disease.  We can afford at most $m$ expensive tests and we must select which of these to administer.
\item There are $n$ people, $k\ge 1$ of whom have a certain disease. We screen everybody with a cheap test and we can select at most $m$ people to give expensive tests to.
\item A system (e.g.,\ a network or an electronic component) has $n$ blocks, of which $k\ge1$ are defective. We run preliminary diagnostics on each block. Based on these results, we decide which blocks to examine in more detail.
\item There are $n$ locations, $k\ge1$ of which have a hazard, e.g., a bomb or a group of terrorists. We can send resources, e.g., first responders, bomb disposal specialists, or drones, to $m$ sites. Here testing may represents surveillance or intelligence.
\end{itemize}

We give a complete solution to the TF problem. While the solution is simple to state, it can be computationally difficult to evaluate in large-scale situations. For this reason we give additional focus to a symmetric case, for which we derive a computationally efficient solution. In this symmetric case, we assume that objects are allocated to boxes following a uniform distribution and that all boxes have the same parameters of estimation. However, the cost structure (i.e.,\ the rewards and penalties) is allowed to differ from box to box. We also consider three extensions of the TF problem. The first is to the case where we get a reward only when we correctly identify all of the objects without making any mistakes. In the second we are still not allowed to make mistakes, but now we only need to identify $m\le k$ of the boxes with objects. In the third, we allow for the number of objects $k$ to be random.

It is not easy to classify the TF problem. It can be seen as a problem from statistical decision theory or a problem from search theory in the spirit of the books \cite{Alp13} and \cite{Gar00} and the papers \cite{Staroverov:1963} and \cite{Cla23}, or just as a problem from mathematical statistics. It is closely related to the Locks, Bombs, and Testing (LBT) problem, which was introduced by Isaac Sonin, one of the authors of the current paper, in the series of papers \cite{lison21}, \cite{son24}, and \cite{son25}. Subsequent work on this problem by other researchers can be found in \cite{Liu:Mazlov:2026}, \cite{Liu:Mazlov:2026b}, and \cite{Schreider:2026}. The LBT problem has $n$ boxes, an Attacker with $m$ ``bombs" that can destroy the boxes, and a Defender with $k$ ``locks" that can be placed in a box to prevent its destruction. This is an adversarial two-player game. In contrast, the TF problem can be seen as a non-adversarial one-player game. While some similar tools are used to solve both problems, they are quite different and aim to model very different applications.

The rest of this paper is organized as follows. In Section~\ref{sec: GTF} we give a detailed solution to the general TF problem. In Section~\ref{sec: ident test} we specialize our results to the case where the testing parameters are the same in each box. In Section~\ref{sec: Example} we present an illustrative  example to better explain  the solution and to show the kinds of situations that one can encounter. In Section~\ref{sec: STF} we give results for the symmetric case and in Section~\ref{sec: Extensions} we give results for three interesting extensions. Finally, we give some conclusions and directions for future work in Section~\ref{sec:con}. 

\section{The General Test and Find Model}\label{sec: GTF}

In this section we give a complete solution to the general TF problem with known $k$. We begin by introducing some notation. Let $T_{i}$ and $S_{i}$ for $i=1,2,\dots,n$ be random variables, each taking only two values: $0$ and $1$. $T_{i}=1$ when the $i$th box has an object and $T_i=0$ otherwise; $S_{i}=1$ when the $i$th box test is positive (i.e., when the test in box $i$ says that there is an object in that box) and $S_i=0$ otherwise. We assume that the probabilities of correct identification of both types, in statistical language the \emph{sensitivity} and \emph{specificity}, are known and given by: $P(S_i=1|T_i=1)=a_i$ and $P(S_i=0|T_i=0)=b_i$. Throughout we assume that $a_i,b_i\ge1/2$. We denote $S=(S_1,S_2,\dots,S_n)$ and $T=(T_1,T_2,\dots,T_n)$. Let $\Gamma = \Gamma_{n,k}$ and $\Sigma=\Sigma_{n,k}$ represent, respectively, the set of all possible realizations of $T$ and the set of all possible realizations of $S$. Note that $\gamma\in\Gamma$ means that $\gamma=(\gamma_{1},\gamma_{2},\dots,\gamma_{n})$ with $\gamma_j\in\{0,1\}$ and $\sum_{j=1}^n \gamma_j=k$ and $s\in\Sigma$ means that $s=(s_{1},s_{2},\dots,s_{n})$ with $s_j\in\{0,1\}$. It can be readily checked that $|\Gamma|= \binom{n}{k}=:M$ and $|\Sigma|= 2^n=:L$, where $|\cdot|$ denotes the cardinality of a set. When we observe $S=s$ for some $s\in\Sigma$, we call $s$ the \emph{signal}, as it is the signal received from the testing. The allocation of objects and the subsequent testing may be viewed as a two-stage random experiment with outcomes represented by pairs $(\gamma ,s)\in\Gamma\times\Sigma=:\Omega$. Note that the DM only observes the second component $s$ of an outcome $\omega=(\gamma,s)\in\Omega$.

Throughout, we assume that tests are performed independently and that the result of any test depends only on whether there is an object in the corresponding box or not. Mathematically, this can be formalized by the following two assumptions. First, we assume that the random variables $S_1,S_2,\dots,S_n$ are conditionally independent given $T$. Second, we assume that for any $\gamma\in\Gamma$ we have
\begin{eqnarray}\label{eq: cond on T}
P(S_i=s_i |T=\gamma) = P(S_i=s_i|T_i = \gamma_i).
\end{eqnarray}
These assumptions imply the following facts.

\begin{lemma}\label{lemma: cond indep}
1. For any $\gamma\in\Gamma$ and $s\in\Sigma$, we have
$$
P(S=s|T=\gamma) = \prod_{i=1}^n P(S_i=s_i|T_i=\gamma_i).
$$
2. Fix $s\in\Sigma$ and $\gamma_i\in\{0,1\}$ and let $S_{-i}$ and $s_{-i}$ denote, respectively, the vectors $S$ and $s$ with the $i$th coordinate removed. We have
$$
P(S_{-i}=s_{-i}, S_i=s_i|T_i=\gamma_i) = P(S_{-i}=s_{-i}|T_i=\gamma_i) P(S_i=s_i|T_i=\gamma_i).
$$
\end{lemma}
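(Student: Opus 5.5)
Part 1 follows directly from the two standing assumptions. Conditional independence of $S_1,\dots,S_n$ given $T$ gives $P(S=s|T=\gamma)=\prod_{i=1}^n P(S_i=s_i|T=\gamma)$ for every $\gamma\in\Gamma$ (all $\gamma\in\Gamma$ have positive probability, or we restrict to those that do). Applying (\ref{eq: cond on T}) to each factor replaces $P(S_i=s_i|T=\gamma)$ by $P(S_i=s_i|T_i=\gamma_i)$, which is the claim.

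For part 2, I will condition on $T$ and sum over those $\gamma\in\Gamma$ whose $i$th coordinate equals the prescribed value $\gamma_i$. Write $\Gamma^{(i)}(\gamma_i)=\{\gamma'\in\Gamma:\gamma'_i=\gamma_i\}$, and assume $P(T_i=\gamma_i)>0$ so that the conditional probabilities are defined. By the law of total probability,
\begin{equation*}
P(S=s|T_i=\gamma_i)=\sum_{\gamma'\in\Gamma^{(i)}(\gamma_i)} P(S=s|T=\gamma')\,\frac{P(T=\gamma')}{P(T_i=\gamma_i)}.
\end{equation*}
Insert part 1 into each term and pull out the $i$th factor. That factor is $P(S_i=s_i|T_i=\gamma'_i)=P(S_i=s_i|T_i=\gamma_i)$, and it is the same for every $\gamma'$ in the sum. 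What remains is
\begin{equation*}
P(S_i=s_i|T_i=\gamma_i)\sum_{\gamma'} \prod_{j\neq i}P(S_j=s_j|T_j=\gamma'_j)\,\frac{P(T=\gamma')}{P(T_i=\gamma_i)}.
\end{equation*}

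It remains to identify the leftover sum as $P(S_{-i}=s_{-i}|T_i=\gamma_i)$. I would obtain this by summing part 1 over $s_i\in\{0,1\}$. Since $\sum_{s_i}P(S_i=s_i|T_i=\gamma'_i)=1$, this gives $P(S_{-i}=s_{-i}|T=\gamma')=\prod_{j\ne i}P(S_j=s_j|T_j=\gamma'_j)$. Averaging over $\gamma'$ with the same weights $P(T=\gamma')/P(T_i=\gamma_i)$ then produces exactly $P(S_{-i}=s_{-i}|T_i=\gamma_i)$.

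There is no real obstacle. The only delicate point is noticing that the factor for coordinate $i$ does not depend on the summation variable, which is exactly why we conditioned on $T_i$. Beyond that, the positivity conditions just need to be stated.
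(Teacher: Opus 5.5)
Your proposal is correct and follows essentially the same route as the paper. Both arguments condition on $T=\gamma'$ over the $\gamma'$ with $\gamma'_i=\gamma_i$, use (\ref{eq: cond on T}) to make the $i$th factor independent of $\gamma'$ and pull it out, and then recognize the remaining weighted sum as $P(S_{-i}=s_{-i}|T_i=\gamma_i)$. The only cosmetic difference is that the paper splits $P(S=s|T=\gamma')$ directly into $P(S_{-i}=s_{-i}|T=\gamma')P(S_i=s_i|T=\gamma')$ by conditional independence, whereas you obtain $P(S_{-i}=s_{-i}|T=\gamma')$ by marginalizing the product from part 1 over $s_i$.
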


\begin{proof}
The first part follows immediately from the fact that the $S_i$'s are conditionally independent given $T$ and \eqref{eq: cond on T}. To see the second part, note that
\begin{eqnarray*}
P(S_{-i}=s_{-i}, S_i=s_i|T_i=\gamma_i) &=& P(S_{-i}=s_{-i}, S_i=s_i,T_i=\gamma_i) \frac{1}{P(T_i=\gamma_i) }\\
&=& \sum_{\gamma'\in\Gamma:\gamma'_i=\gamma_i} P(S_{-i}=s_{-i}, S_i=s_i, T=\gamma') \frac{1}{P(T_i=\gamma_i) } \\
&=& \sum_{\gamma'\in\Gamma:\gamma'_i=\gamma_i} P(S_{-i}=s_{-i}, S_i=s_i| T=\gamma') \frac{P(T=\gamma')}{P(T_i=\gamma_i)} \\
&=& \sum_{\gamma'\in\Gamma:\gamma'_i=\gamma_i} P(S_{-i}=s_{-i}| T=\gamma') P( S_i=s_i| T=\gamma') \frac{P(T=\gamma')}{P(T_i=\gamma_i)} \\
&=&  \frac{P( S_i=s_i| T_i=\gamma_i)}{P(T_i=\gamma_i)}  \sum_{\gamma'\in\Gamma:\gamma'_i=\gamma_i} P(S_{-i}=s_{-i}, T=\gamma') \\
&=& P(S_{-i}=s_{-i}|T_i=\gamma_i) P(S_i=s_i|T_i=\gamma_i),
\end{eqnarray*}
where the fourth line follows by the conditional independence of the $S_i$'s and fifth by \eqref{eq: cond on T}.
\end{proof}

Next, we turn to the joint distribution of $S$ and $T$. Any probability measure $\pi$ on $\Gamma$ can serve as the distribution of $T$. Given distribution $\pi$, let $p(s|\gamma)$, $p(\gamma ,s|\pi)$, $p(s|\pi)$, and $\theta(\gamma |s,\pi)$ denote, respectively, the conditional  probability mass function (pmf) of $S$ given $T=\gamma$, the joint pmf of $T$ and $S$, the marginal pmf of $S$, and the conditional pmf of $T$ given $S=s$. The notation emphasizes the dependence on $\pi$, except for $p(s|\gamma)$, which does not depend on $\pi$. We can think of $\pi$ as the prior distribution of $T$ and $\theta(\gamma |s,\pi)$ as the posterior distribution, given that signal $S=s$ is observed; we use Greek letters to emphasize this relationship. We also define $\alpha_i(s,\pi)= P(T_i=0 |S=s,\pi)$ for $i=1,2,\dots,n$, which will be important going forward. We now characterize these quantities.

\begin{prop}\label{Prop1} If $\gamma=(\gamma_1,\dots,\gamma_n) \in \Gamma$ and $s=(s_1,\dots,s_n)\in\Sigma$, then for any $\pi$
\begin{eqnarray}
p(s|\gamma)&=&\prod_{i: \gamma_i=1}a_i^{s_i}(1-a_i)^{1-s_i}\prod_{i: \gamma_i=0}b_i^{1-s_i}(1-b_i)^{s_i}, \label{eq: psg}\\
p(\gamma,s|\pi)&=&\pi(\gamma)p(s|\gamma),       \label{psgg}\\
p(s|\pi)&=&\sum_{\gamma\in \Gamma}\pi(\gamma)p(s|\gamma),       \label{pspi}\\
\theta(\gamma|s,\pi) &=&\frac{\pi(\gamma)p(s|\gamma)}{p(s|\pi)} , \label{pgs}\\
\alpha_i(s,\pi)&=& \sum_{\gamma\in\Gamma: \gamma_i=0}\theta(\gamma |s,\pi)=
\frac{\sum\limits_{\gamma\in\Gamma: \gamma_i=0} \pi(\gamma)p(s|\gamma)}{\sum_{\gamma\in \Gamma}\pi(\gamma)p(s|\gamma)}, \mbox{ and }\label{eq: alpha i}\\
\sum_{i=1}^n\alpha_i(s,\pi) &=& n-k. \label{snk}
\end{eqnarray}
\end{prop}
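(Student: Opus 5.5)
The plan is to derive the six identities in order, each from the previous ones together with Lemma~\ref{lemma: cond indep}. For \eqref{eq: psg}, I will start from part 1 of Lemma~\ref{lemma: cond indep}, which gives $p(s|\gamma)=\prod_{i=1}^n P(S_i=s_i|T_i=\gamma_i)$. It then remains to evaluate each factor.
\begin{itemize}
\item If $\gamma_i=1$, then $P(S_i=1|T_i=1)=a_i$ and $P(S_i=0|T_i=1)=1-a_i$, so the factor is $a_i^{s_i}(1-a_i)^{1-s_i}$.
\item If $\gamma_i=0$, then $P(S_i=0|T_i=0)=b_i$ and $P(S_i=1|T_i=0)=1-b_i$, so the factor is $b_i^{1-s_i}(1-b_i)^{s_i}$.
\end{itemize}
Splitting the product according to whether $\gamma_i=1$ or $\gamma_i=0$ yields \eqref{eq: psg}.

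The next three identities are elementary probability.
\begin{itemize}
\item \eqref{psgg} is the multiplication rule $P(T=\gamma,S=s)=P(T=\gamma)P(S=s|T=\gamma)$, with $P(T=\gamma)=\pi(\gamma)$.
\item \eqref{pspi} follows from the law of total probability, summing \eqref{psgg} over the partition $\{T=\gamma\}_{\gamma\in\Gamma}$.
\item \eqref{pgs} is Bayes' formula, dividing \eqref{psgg} by \eqref{pspi}, whenever $p(s|\pi)>0$.
\end{itemize}
I will note that $p(s|\pi)>0$ always holds when $a_i,b_i\in(0,1)$. In degenerate cases the posterior is only defined for signals of positive probability, and I would restrict to those $s$.

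For \eqref{eq: alpha i}, the event $\{T_i=0\}$ is the disjoint union of the events $\{T=\gamma\}$ over $\gamma\in\Gamma$ with $\gamma_i=0$. Conditioning on $S=s$ and summing the posterior gives the first equality. Substituting \eqref{pgs} and pulling out the common denominator $p(s|\pi)$, written via \eqref{pspi}, gives the second.

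For \eqref{snk}, I will write $\alpha_i(s,\pi)=E[1-T_i\mid S=s]$ and use linearity of conditional expectation:
$$\sum_i\alpha_i(s,\pi)=E\Big[n-\sum_i T_i\,\Big|\,S=s\Big]=n-k,$$
since $\sum_i T_i=k$ almost surely. Equivalently, exchange the sums:
$$\sum_i\sum_{\gamma:\gamma_i=0}\theta(\gamma|s,\pi)=\sum_\gamma \theta(\gamma|s,\pi)\,\#\{i:\gamma_i=0\}=(n-k)\sum_\gamma\theta(\gamma|s,\pi)=n-k.$$

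There is no real obstacle here. The only points needing care are the bookkeeping in splitting the product for \eqref{eq: psg} and the positivity of $p(s|\pi)$ needed for \eqref{pgs} to make sense.
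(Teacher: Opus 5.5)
Your proposal is correct and follows the paper's proof step for step. The steps are: the product formula via conditional independence with Bernoulli factors; the multiplication rule, total probability and Bayes for the next three identities; summing the posterior over $\{\gamma:\gamma_i=0\}$ for $\alpha_i$; and taking the conditional expectation of the number of empty boxes for \eqref{snk}. Your remark about restricting to signals with $p(s|\pi)>0$ when some $a_i$ or $b_i$ equals $1$ is a small point of care that the paper leaves implicit.
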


\begin{proof}
Since the $S_i$'s are conditionally independent given $T=\gamma$,
$$
p(s|\gamma)=\prod_{i=1}^{n}p(s_i|\gamma)=\prod_{i: \gamma_i=1}p(s_i|\gamma)\prod_{i: \gamma_i=0}p(s_i|\gamma).
$$
Furthermore, given $T=\gamma$, the random variable $S_i$ has a Bernoulli distribution with pmf $p(x)=p^x(1-p)^{1-x}$ for $x=0,1$, where $p=a_i$ if $\gamma_i=1$ and $p=1-b_i$ if $\gamma_i=0$. From here \eqref{eq: psg} follows directly. Next, \eqref{psgg} follows from the definition of conditional probability, \eqref{pspi} from the law of total probability, \eqref{pgs} from Bayes' theorem, and \eqref{eq: alpha i} from the fact that $\alpha_i(s,\pi)= P(T_i=0 |S=s,\pi)=\sum_{\gamma\in\Gamma: \gamma_i=0}P(T=\gamma|S=s,\pi)$ combined with \eqref{pgs} and \eqref{pspi}. Finally, to show \eqref{snk}, let $1_{[T_i=0]}$ denote the indicator function on event $[T_i=0]$ and note that $\sum_{i=1}^n 1_{[T_i=0]}$ is the number of empty boxes, which is $n-k$. From the definition of $\alpha_i(s,\pi)$ to have
\[
\begin{aligned}
\sum_{i=1}^n\alpha_i(s,\pi)=&
\sum_{i=1}^n P(T_i=0 |S=s,\pi)\\
=&\mathrm E\left[\sum_{i=1}^n 1_{[T_i=0]}\Big{|}S= s,\pi\right] = \mathrm E\left[ n-k\Big{|} S=s,\pi\right] = n-k,
\end{aligned}
\]
which completes the proof.
\end{proof}

We are now ready to describe the solution to the TF problem. The basic idea is relatively simple. After performing the testing and observing signal $s$, DM uses \eqref{eq: alpha i} to calculate the probabilities $\alpha_i(s,\pi)=P(T_i=0|S=s,\pi)$ for each $i$. Note that $\alpha_i(s,\pi)$ represents the probability that the $i$th box is empty, given the results of all of the tests, i.e., given that we observed signal $s$. Next, letting $R_i(s,\pi)$ be the expected reward from site $i$ if it is tagged, we have
\begin{eqnarray}
	 R_i(s,\pi)=c_iP(T_i=1|S=s,\pi)-d_iP(T_i=0|S=s,\pi)=c_i-(c_i+d_i)\alpha_i(s,\pi). \label{Ri}
\end{eqnarray}
It follows that $R_i(s,\pi)>0$ if and only if $\frac{c_i}{c_i+d_i}>\alpha_i(s,\pi)$. From here, the optimal strategy for the DM is clear: Let $\ell$ be the number of sites $i$ with $R_i(s,\pi)>0$. If $\ell\le m$, then tag all $\ell$ of these. If $\ell>m$, then tag the $m$ that have the largest $R_i(s,\pi)$. Ties can be broken arbitrarily, for instance by simple randomization. Let $\mathcal I_s$ denote the set of all boxes that are tagged under the optimal strategy when the observed signal is $S=s$. The expected reward, given signal $s$, is
\begin{eqnarray*}
	R(s,\pi) := \sum_{i\in\mathcal I_s} R_i(s,\pi)= \sum_{i\in\mathcal I_s} \left(c_i-(c_i+d_i)\alpha_i(s,\pi)\right)
\end{eqnarray*}
and, unconditionally, it is
\begin{eqnarray}\label{eq: expected reward}
R_{\mathrm{opt}}(\pi) := \sum_{s\in\Sigma}\sum_{\gamma\in\Gamma} R(s,\pi) \pi(\gamma)p(s|\gamma) = \sum_{s\in\Sigma}\sum_{\gamma\in\Gamma} \sum_{i\in\mathcal I_s} R_i(s,\pi) \pi(\gamma)p(s|\gamma).
\end{eqnarray}
Since we can break ties in a variety of ways, there may be multiple sets $\mathcal I_s$ that can be used. However, they all lead to the same value for the expected reward. This discussion can be summarized as follows.

\begin{thrm}
The optimal  strategy for the DM to maximize the expected total reward is given by Algorithm 1.
\end{thrm}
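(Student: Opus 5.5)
The plan is to reduce the global optimization over all strategies to a pointwise optimization for each observed signal, and then to solve that pointwise problem by a greedy argument. A (possibly randomized) strategy is a map assigning to each signal $s\in\Sigma$ a set $\mathcal J_s\subseteq\{1,\dots,n\}$ with $|\mathcal J_s|\le m$, or a distribution over such sets. The DM only observes $s$, so the choice may depend on $s$ alone.

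By the law of total expectation, the expected total reward of such a strategy is
$$
\sum_{s\in\Sigma} p(s|\pi)\, \mathrm E\Big[\sum_{i\in\mathcal J_s}\big(c_i 1_{[T_i=1]}-d_i 1_{[T_i=0]}\big)\Big| S=s,\pi\Big].
$$
By linearity and the definition of $\alpha_i(s,\pi)$ from Proposition~\ref{Prop1}, this equals $\sum_s p(s|\pi)\sum_{i\in\mathcal J_s}R_i(s,\pi)$, where $R_i$ is given in \eqref{Ri}. Each $p(s|\pi)\ge 0$, and the choice of $\mathcal J_s$ for one signal does not constrain the choice for any other signal. Hence the total is maximized if and only if, for every $s$ with $p(s|\pi)>0$, the set $\mathcal J_s$ maximizes $\sum_{i\in\mathcal J}R_i(s,\pi)$ over all $\mathcal J$ with $|\mathcal J|\le m$. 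For randomized strategies, the same holds after averaging, since a mixture cannot beat the best pure choice.

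The remaining step is the deterministic combinatorial claim: given reals $r_1,\dots,r_n$, a subset of size at most $m$ maximizing $\sum_{i\in\mathcal J}r_i$ is obtained by taking the $\min(\ell,m)$ largest strictly positive $r_i$, where $\ell=\#\{i:r_i>0\}$. I would prove this by an exchange argument. First, removing any index with $r_i\le 0$ from an optimal set does not decrease the sum, so we may assume every index in the set has $r_i>0$. Second, if the set contains an index $j$ but omits an index $i$ with $r_i>r_j$, swapping $j$ for $i$ strictly increases the sum, so an optimal set consists of top-ranked positive values. Third, if fewer than $\min(\ell,m)$ indices are chosen, adding another positive index strictly increases the sum. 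Together these show the optimal value equals the sum of the $\min(\ell,m)$ largest positive $r_i$. Sets that differ only by tie-breaking among equal values, or by including zeros, give the same sum. This justifies the remark that every admissible $\mathcal I_s$ yields the same $R_{\mathrm{opt}}(\pi)$ in \eqref{eq: expected reward}.

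Putting the two steps together shows that Algorithm 1, which computes $\alpha_i(s,\pi)$ via \eqref{eq: alpha i} and then tags the top positive $R_i$, attains the maximal expected reward. The only delicate point is making the interchange of optimization and expectation rigorous, in particular treating the strategy class correctly as functions of $s$ only, possibly randomized. This is the first step, and it is handled by writing the reward as a nonnegative combination of per-signal terms as above.
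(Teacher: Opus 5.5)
Your proposal is correct and follows the same route as the paper. You condition on the observed signal, reduce the expected reward to $\sum_{i\in\mathcal I_s}R_i(s,\pi)$ via \eqref{Ri}, and tag the at most $m$ largest strictly positive $R_i$; the paper treats the per-signal decomposition and the greedy selection as self-evident, and your law-of-total-expectation and exchange arguments just make those two steps explicit.
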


\noindent\textbf{Algorithm 1.} Given distribution $\pi$ and signal $S=s$.\\
\textbf{Step 1.} Use \eqref{eq: alpha i} to calculate $\alpha_i(s,\pi)$ for each $i=1,2,\dots,n$.\\
\textbf{Step 2.} Use \eqref{Ri} to calculate $R_i(s,\pi)$ for each $i=1,2,\dots,n$.\\
\textbf{Step 3.}  Let $\ell$ be the number of sites $i$ with $R_i(s,\pi)>0$. If $\ell\le m$, then tag all $\ell$ of these. If $\ell>m$, then tag the $m$ that have the largest $R_i(s,\pi)$. Ties can be broken arbitrarily.\\

For computational purposes, it is often convenient to calculate the relevant quantities for all $\gamma\in \Gamma$ and all $s\in\Sigma$ simultaneously. Toward this end, we define several vectors and matrices. First, we establish our notation for working with matrices. For a matrix $A$, we write $A^\top$ to denote the transpose of $A$. We write $(a_{ij})$ to denote the matrix with $a_{ij}$ in the $i$th row and $j$th column and given a column vector $u=(u_1,\dots,u_r)$ we write $\Diag(u)$ to denote the $r\times r$ diagonal matrix with the elements of $u$ on the diagonal. All vectors should be interpreted as column vectors for the purpose of matrix calculations.

Let $\gamma{(1)},\gamma{(2)},\dots,\gamma{(M)}$ be an arbitrary ordering of the elements in $\Gamma$ and, by a slight abuse of notation, define the vector $\pi = \left(\pi\big(\gamma{(1)}\big),\pi\big(\gamma{(2)}\big),\dots, \pi\big(\gamma{(M)}\big)\right)$. Similarly, we let $s{(1)},s{(2)},\dots,s{(L)}$ be an arbitrary ordering of the elements in $\Sigma$ and define the two  vectors $P_S(\pi)$ and $P_{S}^{(-1)}(\pi)$ by: $P_S(\pi) = \left(p(s{(1)}|\pi),p(s{(2)}|\pi), \dots,p(s{(L)}|\pi)\right)$ and $P_{S}^{(-1)}(\pi) = \left(1/p(s{(1)}|\pi),1/p(s{(2)}|\pi),\dots,1/p(s{(L)}|\pi)\right)$. Next, let $G=(\gamma_j{(i)})$ be an $M\times n$ matrix, let $P=\{p(s{(j)}|\gamma{(i)})\}$ be an $M\times L$ stochastic matrix, let $\Theta(\pi)=\{\theta(\gamma{(j)}|s{(i)},\pi)\}$ be an $L\times M$ stochastic matrix, and let $A(\pi)=\{\alpha_j(s{(i)},\pi)\}$ be an $L\times n$ matrix. Proposition \ref{Prop1} implies the following matrix formulations:
\begin{eqnarray}\label{eq: matrix form}
P_S(\pi)=P^\top\pi, \quad \Theta(\pi)=\Diag(P_{S}^{(-1)})P^\top\Diag(\pi), \quad \mbox{and }\quad A(\pi)=\Theta(\pi)G^c,
\end{eqnarray}
where matrix $G^c$ is a ``complementary" matrix to $G$, i.e., all zeroes and ones in $G$ are interchanged. Next, let $c=(c_1,c_2,\dots,c_n)$ and let $d=(d_1,d_2,\dots,d_n)$ be the vectors of rewards and penalties, respectively, and let $C^*$ be the $L\times n$ matrix, where, for $j=1,2,\dots,n$, all elements of the $j$th column are given by $c_j$. In this context, Algorithm 1 becomes as follows.\\

\noindent\textbf{Algorithm 2.} Given distribution $\pi$ and signal $S=s(i)$ for a given $i$.\\
\textbf{Step 1.} Use \eqref{eq: matrix form} to calculate the matrix $A(\pi)$.\\
\textbf{Step 2.} Calculate the $L\times n$ matrix $R^*(\pi) = C^* -A(\pi)\Diag(c+d)$.\\
\textbf{Step 3.} Let $\ell$ be the number of elements in the $i$th row of $R^*(\pi)$ with positive entries.  If $\ell\le m$, then tag all $\ell$ sites corresponding to these locations. If $\ell>m$, then tag the $m$ sites that correspond to the locations of the largest $R^*_{ij}(\pi)$. Ties can be broken arbitrarily.\\

Note that the steps in Algorithm 2  can be computed for all possible signals $s\in \Sigma$ simultaneously. Thus, we can precompute the decisions ahead of time, which makes it easy to implement the procedure in high-throughput applications. However, the steps of this algorithm require working with and multiplying matrices that may be very high-dimensional, which can be computationally challenging. A computationally tractable solution can be found for the symmetric case and is described in Section \ref{sec: STF} below.

\section{Identical Testing}\label{sec: ident test}

In many practical situations the tests are identical in that the parameters of testing are the same for all boxes, i.e.,\ $a_i=a\ge1/2$, $b_i=b\ge1/2$ for each $i=1,2,\dots,n$. This happens, e.g., in a medical context where the same screening is administered to different patients. In this section we give some interesting results for this situation. They will really pay off when we discuss the symmetric case below.

Up to now we have worked with two main random variables: the signal $S$ and the allocation of objects $T$. We now introduce three more random variables. Let $N$ be the number of zeros in signal $S$, let $N_1$ be the number of these zeros that are in boxes with an object, i.e., the number of {\em false zeros}, and let $N_2$ be the number of these zeros in empty boxes, i.e., the number of {\em correct zeros}. Note that $N= N_1 + N_2$, $0\le N\le n$, $0\le N_2\le n-k$, and
\begin{eqnarray}\label{eq: bound on N1}
0\vee\left(k-n+N\right)\le N_1\le N\wedge k,
\end{eqnarray}
where we use $\wedge$ and $\vee$ to denote, respectively, the minimum and maximum of two real numbers. 

Since the DM only knows $S$ and not $T$, the DM knows the value of $N$, but not the values of $N_1$ or $N_2$. Interestingly enough, the random variables $N$, $N_1$, and $N_2$ are independent of $T$. Even more interesting is the fact that their distributions do not depend on $\pi$, the distribution of $T$. We now verify this fact. Here and throughout, we use write $\mathrm{Bin}(n,p)$ to denote a binomial distribution with parameters $n$ and $p$.

\begin{lemma}
The random variables $N_1$ and $N_2$ are independent with $N_1\sim \mathrm{Bin}(k,1-a)$ and $N_2 \sim \mathrm{Bin}(n-k,b)$. Moreover, $N_1$ and $N_2$ are independent of $T$.
\end{lemma}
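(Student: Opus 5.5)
The plan is to condition on $T=\gamma$ for an arbitrary fixed $\gamma\in\Gamma$ and compute the joint conditional law of $(N_1,N_2)$. If this law is the product of $\mathrm{Bin}(k,1-a)$ and $\mathrm{Bin}(n-k,b)$ for every $\gamma$, then three things follow at once:
\begin{itemize}
\item the conditional law does not depend on $\gamma$, so $(N_1,N_2)$ is independent of $T$;
\item averaging over $\pi$ returns the same unconditional law, so the law does not depend on $\pi$;
\item the product form gives independence of $N_1$ and $N_2$.
\end{itemize}

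Fix $\gamma$ and write $I_1=\{i:\gamma_i=1\}$ and $I_0=\{i:\gamma_i=0\}$, with $|I_1|=k$ and $|I_0|=n-k$. On the event $T=\gamma$ we have
$$N_1=\sum_{i\in I_1}1_{[S_i=0]},\qquad N_2=\sum_{i\in I_0}1_{[S_i=0]}.$$
By part 1 of Lemma~\ref{lemma: cond indep} (equivalently, \eqref{eq: psg} in Proposition~\ref{Prop1}), conditionally on $T=\gamma$ the $S_i$ are independent Bernoulli variables. For $i\in I_1$ we have $P(S_i=0\mid T=\gamma)=1-a$, and for $i\in I_0$ we have $P(S_i=0\mid T=\gamma)=b$. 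Hence, conditionally on $T=\gamma$:
\begin{itemize}
\item $N_1$ is a sum of $k$ i.i.d.\ Bernoulli$(1-a)$ indicators, so $N_1\sim\mathrm{Bin}(k,1-a)$;
\item $N_2$ is a sum of $n-k$ i.i.d.\ Bernoulli$(b)$ indicators, so $N_2\sim\mathrm{Bin}(n-k,b)$;
\item $N_1$ and $N_2$ are functions of disjoint blocks of conditionally independent variables, so they are conditionally independent.
\end{itemize}
Concretely, summing \eqref{eq: psg} over all $s$ with exactly $j$ zeros in $I_1$ and $l$ zeros in $I_0$ gives
$$P(N_1=j,N_2=l\mid T=\gamma)=\binom{k}{j}(1-a)^j a^{k-j}\binom{n-k}{l}b^l(1-b)^{n-k-l}.$$

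Finally, since this expression is free of $\gamma$, we get
$$P(N_1=j,N_2=l,T=\gamma)=P(N_1=j,N_2=l\mid T=\gamma)\,\pi(\gamma),$$
which factors as (the binomial product)$\times\pi(\gamma)$. This yields joint independence of $(N_1,N_2)$ from $T$, and summing over $\gamma$ gives the claimed unconditional binomial law and the independence of $N_1$ and $N_2$. The only delicate point is the bookkeeping that $N_1$ and $N_2$ are defined through the random set $\{i:T_i=1\}$. Conditioning on $T=\gamma$ is exactly what turns that random set into the fixed sets $I_1,I_0$, so I expect no real obstacle beyond stating this clearly.
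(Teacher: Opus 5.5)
Your proposal is correct and follows essentially the same argument as the paper: condition on $T=\gamma$, so that $N_1$ and $N_2$ become sums of conditionally independent Bernoulli$(1-a)$ and Bernoulli$(b)$ indicators over the disjoint fixed sets $\{i:\gamma_i=1\}$ and $\{i:\gamma_i=0\}$, and then conclude from the fact that this product-binomial conditional law does not depend on $\gamma$. Your explicit formula for $P(N_1=j,N_2=l\mid T=\gamma)$ and the factorization of the joint law with $\pi(\gamma)$ simply spell out the paper's final sentence in more detail.
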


\begin{proof}
Note that conditionally, given $T$, we have $N_1\sim \mathrm{Bin}(k,1-a)$ and $N_2 \sim \mathrm{Bin}(n-k,b)$. Furthermore, given $T$, $N_1$ and $N_2$ are conditionally independent since $N_1$ only depends on $S_i$ where $T_i=1$, $N_2$ only depends on $S_i$ where $T_i=0$, and the $S_i$'s are conditionally independent. Since the conditional joint distribution of $N_1$ and $N_2$ does not depend on $T$, the random variables are independent of $T$ and have the required joint distribution.
\end{proof}

Since $N=N_1+N_2$, $N$ is independent of $T$ and its distribution does not depend on $\pi$ as it is the convolution of the distributions of $N_1$ and $N_2$. Unless $b=1-a$, this distribution is not binomial. Instead, it is the so-called Poisson binomial distribution, see, e.g., \cite{Hong:2013} and the references therein. Let  $p_1$, $p_2$, and $g_{n,k}$ be the pmfs of $N_1$, $N_2$, and $N$, respectively. By the standard discrete convolution formula, we have
\begin{eqnarray}
g_{n,k}(x)=\sum_{t} p_1(t)p_2(x-t) = \sum_{t}p_1(x-t)p_2(t), \quad x =0,1,\dots, n. \label{con}
\end{eqnarray}
The following distribution also plays an important role in our analysis: the joint \dst of $N_1$ and $N$ has pmf
\begin{eqnarray}
s(t,x) &:=& P(N_1=t,N=x)= P(N_1=t,N_2=x-t)\nonumber \\
&=& p_1(t)p_2(x-t)={k\choose t}(1-a)^ta^{k-t}{n-k\choose x-t} b^{x-t}(1-b)^{n-k-x+t}
\label{stx}
\end{eqnarray}
for $0\leq x\leq n$, $0\vee(k-n+x)\le t\le x\wedge k$.

Before proceeding, we define three functions that are closely related to the random variables $N$, $N_1$, and $N_2$. Let $\mathbb N_n=\{0,1,2,\dots,n\}$, let $M:\Sigma\mapsto\mathbb N_n$, and let $M_1,M_2:\Gamma\times\Sigma\mapsto\mathbb N_n$, where $M(s)=\sum_{i=1}^n (1-s_i)$, $M_1(\gamma,s)=\sum_{i=1}^n \gamma_i (1-s_i)$, and $M_2(\gamma,s) = \sum_{i=1}^n (1-\gamma_i) (1-s_i)$.  It is readily checked that $M(s)=M_1(\gamma,s)+M_2(\gamma,s)$ and that $N=M(S)$, $N_1 = M_1(T,S)$, and $N_2=M_2(T,S)$.  With this notation, let us denote 
$$
G(t,x)=\{(\gamma,s): M_1(\gamma,s)=t, M(s)=x\}, \ 0\leq x\leq n, \ 0\vee(k-n+x)\le t\le x\wedge k, 
$$
and note that these sets form a partition of $\Omega=\Gamma\times \Sigma$. In words, $G(t,x)$ is the set of all pairs $(\gamma,s)$ such that $s$ has exactly $x$ zeros and, when $T=\gamma$, it has exactly $t$ false zeros and $x-t$ true zeros.

\begin{prop}\label{Lem1}
a) For all $\gamma \in \Gamma$ and all $s\in\Sigma$ with $(\gamma,s)\in G(t,x)$, we have
\begin{eqnarray}
p(s|\gamma) =a^{k-t}(1-a)^{t}b^{x-t}(1-b)^{n+t-k-x}=: p(t,x). \label{ptx}
\end{eqnarray}
b) For fixed $x$, the function $p(t,x)$ is monotonically decreasing in $t$. \\
c) For all $t,x$ with $0\leq x\leq n$ and $0\vee(k-n+x)\le t\le x\wedge k$, we have 
\begin{eqnarray}\label{Gtx}
|G(t,x)|= \binom{n}{k}\binom{k}{t}\binom{n-k}{x-t}=\binom{n}{x}\binom{x}{t}\binom{n-x}{k-t},
\end{eqnarray}
\begin{eqnarray}
p(t,x)|G(t,x)|=\binom{n}{k}s(t,x), \ \mbox{and} \ \ p(t,x)=\frac{s(t,x)}{\binom{k}{t}\binom{n-k}{x-t}}. \label{tstx}
\end{eqnarray}
\end{prop}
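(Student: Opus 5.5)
For part a), I will start from \eqref{eq: psg} with $a_i=a$ and $b_i=b$. For $(\gamma,s)\in G(t,x)$, I will classify the boxes into four types.
\begin{itemize}
\item $\gamma_i=1,s_i=0$: there are $M_1(\gamma,s)=t$ such boxes, and each contributes a factor $1-a$.
\item $\gamma_i=1,s_i=1$: there are $k-t$ such boxes, and each contributes $a$.
\item $\gamma_i=0,s_i=0$: there are $M_2(\gamma,s)=x-t$ such boxes, and each contributes $b$.
\item $\gamma_i=0,s_i=1$: the remaining $(n-k)-(x-t)=n+t-k-x$ boxes each contribute $1-b$.
\end{itemize}
Multiplying these factors gives \eqref{ptx}. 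The key observation is that $p(s|\gamma)$ depends on $(\gamma,s)$ only through the counts $(t,x)$.

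For part b), I will fix $x$ and take the ratio of consecutive values:
$$\frac{p(t+1,x)}{p(t,x)}=\frac{(1-a)(1-b)}{ab}.$$
Since $a,b\ge 1/2$, we have $1-a\le a$ and $1-b\le b$, so this ratio is at most $1$. The function is therefore nonincreasing, and strictly decreasing unless $a=b=1/2$. I will note this degenerate case: "monotonically decreasing" should be read in the weak sense, or the strict claim requires $(a,b)\neq(1/2,1/2)$.

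For part c), I will count $G(t,x)$ in two ways. In the first count, I choose $\gamma$ in $\binom{n}{k}$ ways. I then choose which $t$ of its $k$ occupied boxes receive $s_i=0$, in $\binom{k}{t}$ ways, and which $x-t$ of its $n-k$ empty boxes receive $s_i=0$, in $\binom{n-k}{x-t}$ ways. All other coordinates of $s$ equal $1$. In the second count, I choose $s$ with $x$ zeros in $\binom{n}{x}$ ways. I then choose which $t$ of the zero positions carry objects, in $\binom{x}{t}$ ways, and which $k-t$ of the $n-x$ one-positions carry the rest, in $\binom{n-x}{k-t}$ ways.

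Finally, I will compare $p(t,x)$ from \eqref{ptx} with $s(t,x)$ from \eqref{stx}. They contain exactly the same powers of $a,1-a,b,1-b$, so $s(t,x)=\binom{k}{t}\binom{n-k}{x-t}p(t,x)$. Combining this with the first expression in \eqref{Gtx} yields both identities in \eqref{tstx}.

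There is no real obstacle here. The only care needed is in correctly identifying the exponent $n+t-k-x$, that is, the number of false positives, and in noting the boundary case $a=b=1/2$ in part b).
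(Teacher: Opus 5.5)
Your proposal is correct and follows the paper's proof essentially step for step: the four-type count for a), the factor $\frac{(1-a)(1-b)}{ab}\le 1$ for b), and the count of $G(t,x)$ followed by the comparison of powers with $s(t,x)$ for c). The only variation is that you prove the second equality for $|G(t,x)|$ by a second combinatorial count, where the paper simplifies factorials; both are equally valid. One small correction to your aside in b): strictness also fails when $a=1$ or $b=1$, because then $p(t,x)$ vanishes for several consecutive $t$ and your ratio can be $0/0$. The strict version therefore needs $1/2\le a,b<1$ with $(a,b)\neq(1/2,1/2)$, which is the same condition the paper uses in Extension I. Writing $p(t+1,x)=\frac{(1-a)(1-b)}{ab}\,p(t,x)$ instead of a ratio avoids the division by zero.
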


\begin{proof}
a) Note that, if $(\gamma,s)\in G(t,x)$, then the number of false zeros is $t$, the number of true zeros is $x-t$, the number of true ones is $k-t$, and the number of false ones is $n-x-k+t$. From here, the result follows from \eqref{eq: psg} and the definitions of $N_1$ and $N$.

b) The result follows from the fact that $\frac{p(t+1,x)}{p(t,x)}=\frac{(1-a)(1-b)}{ab}\le1$ for $a,b\ge1/2$. 

c) Fix $(\gamma,s)\in G(t,x)$. The number of choices for $\gamma$ is $\binom{n}{k}$ as there is no restriction on $\gamma$. For a given $\gamma$ we can only have $s$ that has exactly $x$ zeros with $t$ of them in locations for which $\gamma$ has a $1$ and $x-t$ of them in locations where $\gamma$ has a $0$. The number of ways to select the former is $\binom{k}{t}$ and the number of ways to select the latter is $\binom{n-k}{x-t}$. Now, applying the Product Principle gives the first equality in \eqref{Gtx}. The second equality in \eqref{Gtx} follows by writing both sides using factorials and simplifying. The second equality in \eqref{tstx} follows from \eqref{stx}. The first equality in \eqref{tstx} follows from the second by applying \eqref{Gtx}.
\end{proof}

\section{Illustrative Example} \label{sec: Example}

In this section we apply Algorithm 1 to fully solve the TF problem with $n=2$ boxes and $k=1$ objects. For the number of tags, we consider both possible cases: $m=1,2$.  For simplicity, we focus on the Identical Testing case, where the parameters of testing are the same for the two boxes with $a_i=a\ge1/2$, $b_i=b\ge1/2$. However, we allow the cost parameters, $c_1,d_1, c_2,d_2$, to be distinct and we make no assumptions on the distribution $\pi$. We will see that, even in this simple situation, a variety of interesting scenarios can arise. 

There are $\binom{2}{1}=2$ possible positions of the object: $\gamma(1)=(1,0)$ and $\gamma(2)=(0,1)$. Thus $\Gamma=\left\{\gamma(1),\gamma(2)\right\}$ and our prior distribution can be written as $\pi=( \pi(\gamma(1)),\pi(\gamma(2)) )=(x,1-x)$ for some $x\in[0,1]$. Here, $x$ is the probability that the object is in Box $1$ and $1-x$ is the probability that it is in Box $2$. Since $x$ uniquely determines $\pi$, we will replace $\pi$ with $x$ in our notation, e.g., we write $p(s|x)$ instead of $p(s|\pi)$. 

There are $2^2=4$ possible signals: $s(1)=(1,1)$, $s(2)=(1,0)$, $s(3)=(0,1)$, $s(4)=(0,0)$ and thus $\Sigma=\{s(j):j=1,2,3,4\}$. Using \eqref{eq: psg} we can obtain all $8$ values of  $p(s|\gamma)$, $s\in\Sigma,\ \gamma\in\Gamma$. For $j=1,2,3,4$, we have $p(s(j)|\gamma(1))=e_j$, where $e_1=a(1-b)$, $e_2=ab$, $e_3=(1-a)(1-b)$, and $e_4=(1-a)b$. Similarly, the values of $p(s(j)|\gamma(2))$ are $e_1, e_3, e_2, e_4$ for $j=1,2,3,4$, respectively. This is summarized in Columns 2 and 3 of Table \ref{Tab:STF}. The probabilities of $p(s|x)$ can be obtained using \eqref{pspi} and are as follows: $p(s(1)|x)=e_1$, $p(s(2)|x)= xe_2+(1-x)e_3=: q_2(x)$, $p(s(3)|x)=xe_3+(1-x)e_2=: q_3(x)$, $p(s(4)|x)=e_4$. These are summarized in Column 4 of Table  \ref{Tab:STF}. Next, we calculate values for the $\alpha_i$'s using \eqref{eq: alpha i}. The results are given in Columns 5 and 6 of Table~\ref{Tab:STF}. Applying \eqref{Ri} gives 
\begin{eqnarray*}
R_1(s(1),x)&=&R_1(s(4),x)=c_1-(c_1+d_1)(1-x), \\
R_1(s(2),x)&=&c_1-\frac{(c_1+d_1)(1-x)e_3}{xe_2+(1-x)e_3}, \\
R_1(s(3),x)&=&c_1-\frac{(c_1+d_1)(1-x)e_2}{xe_3+(1-x)e_2},\\
 R_2(s(1),x) &=&R_2(s(4),x)= c_2-(c_2+d_2)x,\\
 R_2(s(2),x)&=& c_2- \frac{(c_2+d_2)xe_2}{xe_2+(1-x)e_3},\mbox{ and }\\
 R_2(s(3),x)&=& c_2-\frac{(c_2+d_2)xe_3}{xe_3+(1-x)e_2}.
\end{eqnarray*}
Note that, when $a=b=1/2$, which corresponds to the non-informative case, the value of $R_i(s(j),x)$ does not depend on $j$. If, in addition, $x=1/2$, then $R_i(s(j),1/2) = 0.5(c_i-d_i)$ for each $i,j$. 

\begin{table}
\begin{center}
	\begin{tabular}
{|c|c|c|c|c|c|} 
         \hline   $ $  & $p(s|\gamma(1))$ & $p(s|\gamma(2))$ & $p(s|\pi)$ & $\alpha_1(s,\pi)$ 
         & $\alpha_2(s,\pi)$ 
    \\
		\hline
         $s(1)=(1,1) $  & $e_1$  & $e_1$   &$e_1$      &  $1-x $  & $x$    \\
		\hline
         $ s(2)=(1,0)$   & $e_2$  & $e_3$    & $q_2(x)$  & $(1-x)e_3/q_2(x)$  & $xe_2/q_2(x)$  \\
		\hline
		$s(3)=(0,1)$     & $e_3$ & $e_2$     & $q_3(x)$  & $(1-x)e_2/q_3(x)$ & $xe_3/q_3(x)$   \\
        \hline
		$s(4)=(0,0)$    & $e_4$ & $e_4 $    &  $e_4$        & $1-x $    & $x$   \\
         \hline
	\end{tabular}
\end{center}
\caption{Summary of relevant quantities for the TF problem with parameters: $n=2$, $k=1$, and probability $x$ that the object is in Box $1$. Quantities $e_1$--$e_4$ and $q_2(x),q_3(x)$ are defined in the text.}\label{Tab:STF}
\end{table}

It is easily checked that $\frac{\partial}{\partial x} \alpha_1(s,x)<0$ for each $s\in\Sigma$ and that, for fixed $s$, the values of $\alpha_1(s,x)$ are monotonically decreasing from $1$ to $0$ as $x$ increases from $0$ to $1$. For $\alpha_2$ the opposite is true, which can be seen from the fact that \eqref{snk} implies that $\alpha_1(s,x)+\alpha_2(s,x)=n-k=1$.  From here, \eqref{Ri} implies that $R_1(s,x)$ is monotonically increasing from negative values to positive, while $R_2(s,x)$ does the reverse. Let $x_i(j)$ be the root of $R_i(s(j),x)$ for $i=1,2$ and $j=1,2,3,4$. It is readily checked that $x_1(1) = x_1(4) = \frac{d_1}{c_1+d_1}$, $x_2(1) = x_2(4) = \frac{c_2}{c_2+d_2}$, $x_1(2) = \frac{d_1 e_3}{c_1 e_2+d_1e_3}$, $x_2(2) = \frac{c_2 e_3}{c_2 e_3+d_2e_2}$, $x_1(3) = \frac{d_1 e_2}{c_1 e_3+d_1e_2}$, and $x_2(3) = \frac{c_2 e_2}{c_2 e_2+d_2e_3}$. This means that for each $j$, the interval $[0,1]$ contains two subintervals $I_1(j)=\left(x_1(j),1\right]$ and $I_2(j)=\left[0,x_2(j)\right)$ such that, if signal $s(j)$ is observed, then on $I_1(j)$ we expect to get a positive reward if we tag Box 1 and the same for Box 2 on  $I_2(j)$. For some values of the parameters these intervals may overlap and for others not. 

Assume, for the moment, that signal $s(j)$ is observed and that the intervals overlap, i.e.,\ that $x_2(j)> x_1(j)$. In this case, if the number of tags is $m=2$, then DM will tag only Box 1 if $1\ge x> x_2(j)$,  tag both boxes when $x_2(j)>x>x_1(j)$, and tag only Box 2 when $0\le x\le x_1(j)$. When $m=1$, we need another point, $x_*(j)$, which is the unique point satisfying $R_1(s(j),x_*(j))=R_2(s(j),x_*(j))$. We necessarily have $x_1(j)\le x_*(j)\le x_2(j)$. In this case, DM will tag Box $1$ when $x_*(j)<x\le1$ and DM will tag Box $2$ when $0\le x<x_*(j)$. Note that, when $1\ge x> x_2(j)$ and when $0\le x\le x_1(j)$, the DM makes the same decision no matter the value of $m$. 

Now assume that  signal $s(j)$ is observed and that $x_2(j)< x_1(j)$. In this case DM will tag only Box 1 if $1\ge x> x_2(j)$, not tag any box when $x_2(j)>x>x_1(j)$, and tag only Box 2 when $0\le x\le x_1(j)$. Here, the decision is the same regardless of $m$. 

To better illustrate our results, in Figures  \ref{Fig:STF_case1} and \ref{Fig:STF_case2} we plot $R_1$ and $R_2$ for several choices of the parameters. In Figure \ref{Fig:STF_case1} we consider the case with  $a=2/3$, $b=3/4$, $c_1=2$, $c_2 =3$, $d_1=1$, and $d_2=2$. Here, we have $x_2(j)> x_1(j)$ for all $j$ and thus, we would follow different strategies depending on the value of $m$. In Figure \ref{Fig:STF_case2} we consider a situation with mostly the same parameters except that now $d_1=2$ and $d_2=5$. In this case, we have  $x_2(j)<x_1(j)$ for all $j$ and we would never tag more than one box at a time.

\begin{figure}[tp!]
\includegraphics[width=0.9\textwidth, height = 5.5 cm ]{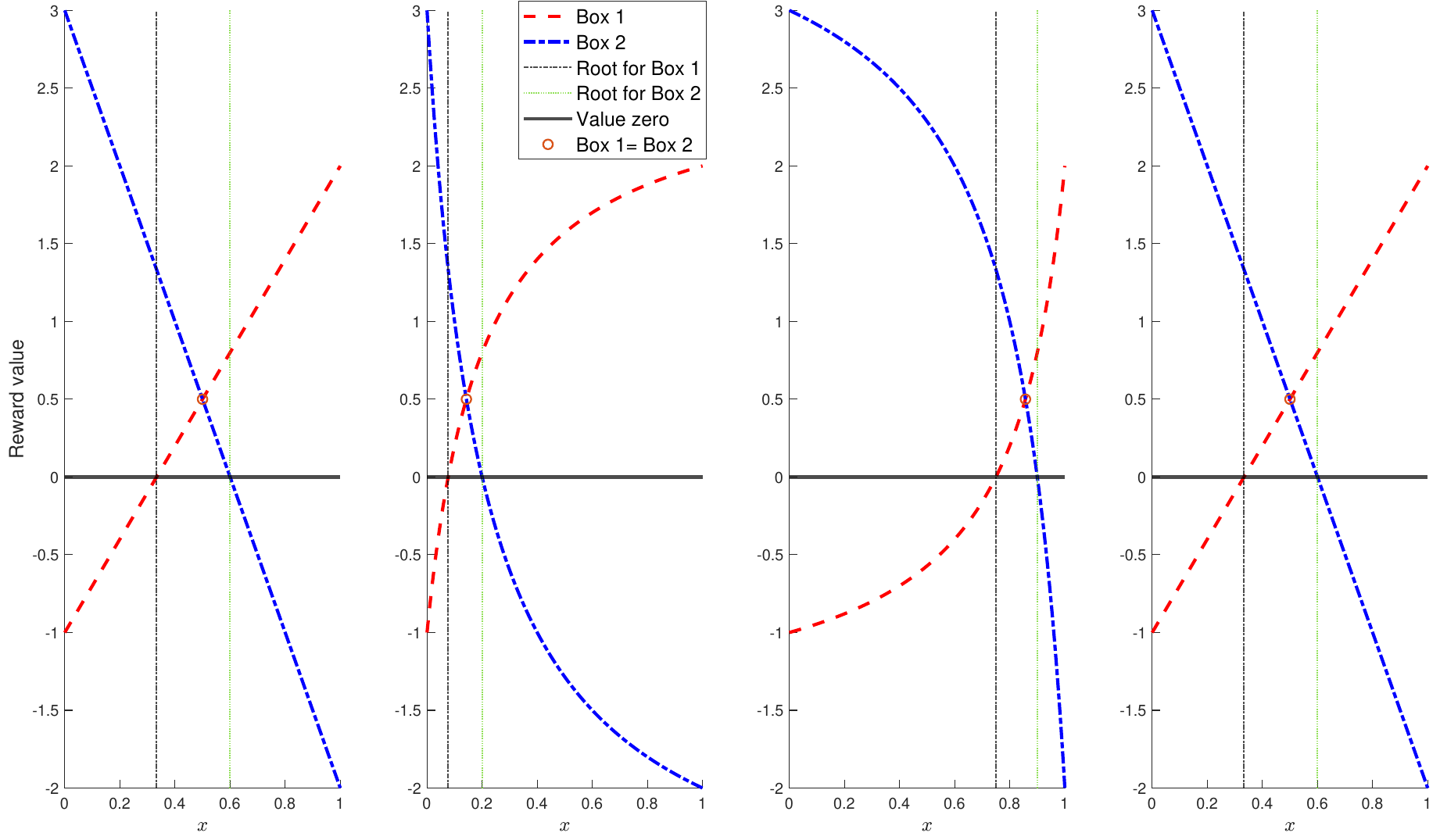}
\caption{Plots for the TF problem with parameters: $n=2$, $k=1$, $a=2/3$, $b=3/4$, $c_1=2$, $c_2 =3$, $d_1=1$, and $d_2=2$. The four subplots plot the expected reward values $R_1(s(j))$ and $R_2(s(j))$ for $j=1,2,3,4$, respectively, against the probability $x$. The vertical lines indicate the locations where the reward values hit zeros. }\label{Fig:STF_case1}
\end{figure}

\begin{figure}[tp!]
\includegraphics[width=0.9\textwidth, height = 5.5 cm ]{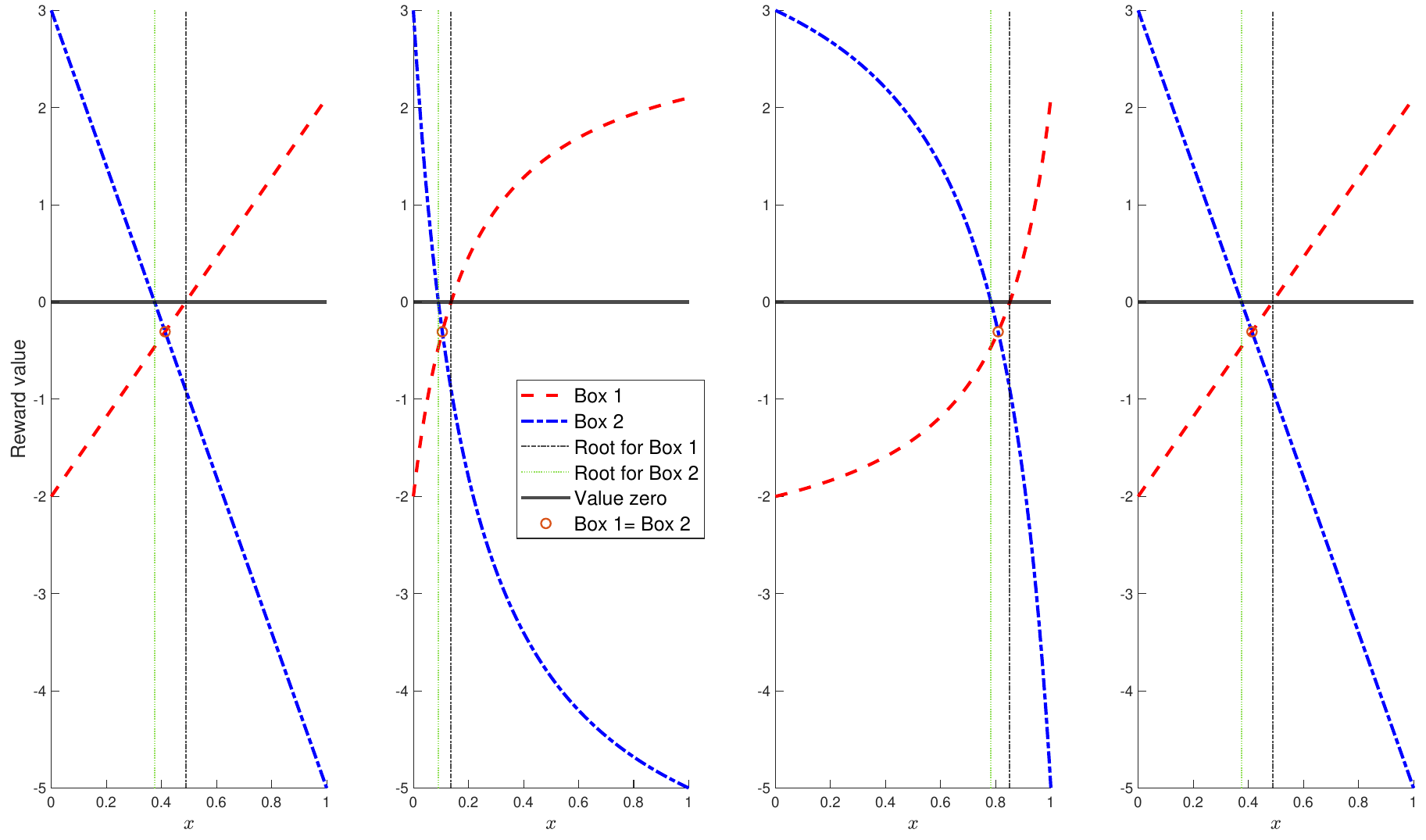}
\caption{Plots for the TF problem with parameters: $n=2$, $k=1$, $a=2/3$, $b=3/4$, $c_1=2$, $c_2 =3$, $d_1=2$, and $d_2=5$. The four subplots plot the expected reward values $R_1(s(j))$ and $R_2(s(j))$ for $j=1,2,3,4$, respectively, against the probability $x$. The vertical lines indicate the locations where the reward values hit zeros. 
}\label{Fig:STF_case2}
\end{figure}

\section{Symmetric Test and Find Model} \label{sec: STF}

While the solution to the general TF problem is straightforward, it requires dealing with $M={n \choose k}$ allocations of the objects and $L=2^n$ signals, which grows exponentially fast.  In this section we consider a symmetric situation, where one can get much more explicit results and the size of the problem grows only polynomially. As in Section \ref{sec: ident test}, we assume that we have identical testing, i.e.,\ $a_i=a$ and $b_i=b$ for all $i$, and, throughout, we use the notation defined in that section. In addition, we now assume that $\pi=\pi_*$ is the uniform \dst on $\Gamma$. We still allow the cost structure to be different in different boxes and, thus, the values of $c_i$ and $d_i$ may still depend on $i$. We refer to this situation as the symmetric case. Since everything in this section is done under the uniform distribution $\pi_*$, we do not emphasize it in the notation as we had in the previous sections. Here, any combination of $k$ filled boxes, i.e.,\ every $\gamma\in\Gamma$, has the same probability $\pi_*(\gamma) = P(T=\gamma)=1/\binom{n}{k}$. It follows that
\begin{eqnarray}\label{eq: uncond prob Ti}
P(T_i=1)=\frac{\binom{n-1}{k-1}}{\binom{n}{k}}=\frac{k}{n} \mbox{ and }  P(T_i=0)=\frac{n-k}{n}.
\end{eqnarray}
The solution in this case is influenced by the approach to solving the symmetric LBT problem in \cite{son25}. We now give several results for this case.

\begin{prop}\label{Lem1b}
a) Under $\pi_*$, for any $s\in\Sigma$ and any $x=0,1,\dots,n$, when $M(s)=x$, we have
\begin{eqnarray}
p(s) = \frac{g_{n,k}(x)}{\binom{n}{x}} \mbox{ and } P(S=s|N=x)=\frac{1}{\binom{n}{x}}.
 \label{psx}
\end{eqnarray}
b) Under $\pi_*$, for all $\gamma \in \Gamma$ and all $s\in\Sigma$ with $(\gamma,s)\in G(t,x)$, we have
$$
\theta(\gamma|s)= \frac{{n\choose x} p(t,x)}{{n\choose k} g_{n,k}(x)} =\frac{\binom{n}{x}s(t,x)}{|G(t,x)|g_{n,k}(x)}=: \theta(t,x).
$$
c) For fixed $x$, the function $\theta(t,x)$ is monotonically decreasing in $t$. \\
d) We have 
\begin{eqnarray*} 
 \sum_{t,x} p(t,x)|G(t,x)|=\binom{n}{k} , \ \ \sum_{t,x} \theta(t,x)|G(t,x)|=2^n, 
 \ \ \mbox{and} \ \ \sum_{t,x}|G(t,x)|=|\Omega|=\binom{n}{k}2^n.
\end{eqnarray*}
\end{prop}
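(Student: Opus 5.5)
The plan is to derive everything from Proposition~\ref{Prop1} specialised to $\pi_*(\gamma)=1/\binom{n}{k}$, together with the counting identities of Proposition~\ref{Lem1}.

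For part a), fix $s$ with $M(s)=x$. By \eqref{pspi}, $p(s)=\binom{n}{k}^{-1}\sum_{\gamma\in\Gamma}p(s|\gamma)$. I group the $\gamma$'s by $t=M_1(\gamma,s)$. By \eqref{ptx} each such term equals $p(t,x)$. The number of $\gamma$ with $M_1(\gamma,s)=t$ is $\binom{x}{t}\binom{n-x}{k-t}$: choose $t$ of the zeros of $s$ and $k-t$ of its ones to carry objects. Hence
$$p(s)=\binom{n}{k}^{-1}\sum_t\binom{x}{t}\binom{n-x}{k-t}p(t,x).$$
Using the second form of \eqref{Gtx}, $\binom{x}{t}\binom{n-x}{k-t}=|G(t,x)|/\binom{n}{x}$. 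Then \eqref{tstx} gives $p(t,x)|G(t,x)|=\binom{n}{k}s(t,x)$, so
$$p(s)=\binom{n}{x}^{-1}\sum_t s(t,x)=\binom{n}{x}^{-1}P(N=x)=g_{n,k}(x)/\binom{n}{x}.$$
Since $N=M(S)$ is a function of $S$, we have $P(S=s|N=x)=p(s)/g_{n,k}(x)=1/\binom{n}{x}$. This also shows $p(s)$ depends on $s$ only through $x$, which is the key observation. The only real work in the whole proof is this regrouping-and-counting step.

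For part b), apply \eqref{pgs}: $\theta(\gamma|s)=\pi_*(\gamma)p(s|\gamma)/p(s)=\binom{n}{k}^{-1}p(t,x)\binom{n}{x}/g_{n,k}(x)$, which is the first expression. The second expression follows by substituting $p(t,x)=\binom{n}{k}s(t,x)/|G(t,x)|$ from \eqref{tstx}.

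For part c), $\theta(t,x)$ is a positive constant (depending only on $x$) times $p(t,x)$. So monotonic decrease in $t$ is inherited directly from Proposition~\ref{Lem1}(b).

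For part d), I treat each sum separately.
\begin{itemize}
\item First sum: by \eqref{tstx}, $\sum_{t,x}p(t,x)|G(t,x)|=\binom{n}{k}\sum_{t,x}s(t,x)=\binom{n}{k}$, since $s(t,x)$ is a joint pmf. Alternatively, since the $G(t,x)$ partition $\Omega$, this sum equals $\sum_{\gamma}\sum_s p(s|\gamma)=\binom{n}{k}$.
\item Second sum: since the $G(t,x)$ partition $\Omega$, it equals $\sum_{s}\sum_\gamma\theta(\gamma|s)=\sum_s 1=2^n$.
\item Third sum: it is $|\Omega|=|\Gamma||\Sigma|=\binom{n}{k}2^n$, again by the partition property.
\end{itemize}
One point to check is that $G(t,x)$ is nonempty exactly on the stated index range, so the sums over $(t,x)$ run over the partition; this holds by \eqref{eq: bound on N1}.
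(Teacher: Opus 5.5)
Your proposal is correct and follows the paper's proof essentially step for step. Part a) uses the same regrouping of $\gamma$ by $t=M_1(\gamma,s)$, with count $|G(t,x)|/\binom{n}{x}$, combined with \eqref{tstx}; part b) uses the same substitutions; and part c) inherits monotonicity from $p(t,x)$ in the same way. The only cosmetic difference is the second sum in d): you use the partition of $\Omega$ and the fact that each posterior $\theta(\cdot|s)$ sums to one, whereas the paper computes $\sum_t \theta(t,x)|G(t,x)|=\binom{n}{x}$ directly from $\sum_t s(t,x)=g_{n,k}(x)$. Both arguments are fine, and both tacitly need $g_{n,k}(x)>0$.
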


The second equality in \eqref{psx} is the intuitively appealing observation that, under $\pi_*$, the conditional distribution of $S$, given the number of zeros, is uniform. The formulas in Part d) are useful for checking calculations. The fact that $\theta(t,x)$ is monotonically decreasing in $t$ has the following intuitive interpretation. While the prior distribution of $\gamma$ is uniform, after observing $S=s$, the posterior distribution assigns greater probability to values of $\gamma$ that result in fewer errors, i.e., those with smaller values of $M_1(\gamma,s)$.

\begin{proof}
a) We have
\begin{eqnarray*}
p(s) = \sum_{\gamma\in\Gamma}p(s|\gamma)\pi_*(\gamma) = \frac{1}{{n\choose k}}\sum_{i=0}^x p(i,x)\frac{|G(i,x)|}{{n\choose x}} = \frac{1}{{n\choose x}}\sum_{i=0}^x s(i,x) = \frac{P(N=x)}{{n\choose x}},
\end{eqnarray*}
where the second equality follows from the fact that $|G(i,x)|={n\choose x}|\{\gamma\in\Gamma:(\gamma,s)\in G(i,x)\}|$, the third equality follows from \eqref{tstx}, and the last equality from the definition of $s(i,x)$ in \eqref{stx}. From here, the first equality in \eqref{psx} follows from the definition of $g_{n,k}$ and the second from the fact that, when $M(s)=x$, we have $P(S=s) = P(S=s, N=x)$. 

b) The result follows by applying \eqref{pgs}, \eqref{psx}, and \eqref{tstx}.

c) This follows by combining the corresponding result for $p(t,x)$ given in Proposition \ref{Lem1} with Part b).

d) The first equality follows from \eqref{tstx} and the fact that $s(t,x)$ is a joint pmf. To see that the second equality holds note that Part b) implies that $\theta(t,x)|G(t,x)|=\frac{{n\choose x}s(t,x)}{g_{n,k}(x)}$. From here \eqref{con} combined with \eqref{stx} gives
$$
\sum_{x,t}\theta(t,x)|G(t,x)| = \sum_{x=0}^n \frac{{n\choose x}}{g_{n,k}(x)} \sum_{t}s(t,x) = \sum_x {n\choose x}=2^n.
$$
The third equality follows from the fact that the sets $G(t,x)$ form a partition of $\Omega$.
\end{proof}

We now give explicit formulas for $\alpha_i(s)=P(T_i=0|S=s)$ in the symmetric TF problem.

\begin{prop}\label{Lem2}
Fix $i\in\{1,2,\dots,n\}$, $s\in \Sigma$, and let $x=M(s)$. If $s_i=0$, then $x\in\{1,2,\dots,n\}$ and
$$
\alpha_i(s) =b \frac{(n-k)}{x}\frac{g_{n-1,k}(x-1)}{g_{n,k}(x)}=:\alpha^-(x,k).
$$
If $s_i=1$, then $x\in\{0,1,\dots,n-1\}$ and
$$
\alpha_i(s) = (1-b)\frac{(n-k)}{n-x}\frac{g_{n-1,k}(x)}{g_{n,k}(x)}=: \alpha^+(x,k).
$$
Moreover, we have
$$
x  \alpha^-(x,k) + (n-x) \alpha^+(x,k) = n-k.
$$
\end{prop}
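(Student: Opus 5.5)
Write $\alpha_i(s)=P(T_i=0,S=s)/p(s)$. The denominator is already known from Proposition \ref{Lem1b}a): $p(s)=g_{n,k}(x)/\binom{n}{x}$. So everything reduces to computing the joint probability $P(T_i=0,S=s)$ under $\pi_*$.

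To do this, condition on $T_i=0$ and split off box $i$ using Lemma \ref{lemma: cond indep}, part 2:
$$P(T_i=0,S=s)=P(T_i=0)\,P(S_i=s_i|T_i=0)\,P(S_{-i}=s_{-i}|T_i=0).$$
The first two factors are immediate. By \eqref{eq: uncond prob Ti}, $P(T_i=0)=(n-k)/n$. The test factor is $P(S_i=s_i|T_i=0)=b$ if $s_i=0$ and $1-b$ if $s_i=1$.

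The key observation concerns the third factor. Conditionally on $T_i=0$, the vector $T_{-i}$ is uniform over allocations of $k$ objects to the remaining $n-1$ boxes. The tests in those boxes are identical and conditionally independent. So $(T_{-i},S_{-i})$ is exactly the symmetric TF model with parameters $(n-1,k)$.

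We can therefore apply Proposition \ref{Lem1b}a) to this smaller model. Let $x'$ be the number of zeros in $s_{-i}$; then $P(S_{-i}=s_{-i}|T_i=0)=g_{n-1,k}(x')/\binom{n-1}{x'}$. The value of $x'$ depends on the case:
\begin{itemize}
\item if $s_i=0$, then $x'=x-1$, which forces $x\ge1$;
\item if $s_i=1$, then $x'=x$, which forces $x\le n-1$.
\end{itemize}

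Assembling the pieces gives, in the case $s_i=0$,
$$\alpha_i(s)=b\,\frac{n-k}{n}\,\frac{\binom{n}{x}}{\binom{n-1}{x-1}}\,\frac{g_{n-1,k}(x-1)}{g_{n,k}(x)}.$$
Since $\binom{n}{x}/\binom{n-1}{x-1}=n/x$, this simplifies to $\alpha^-(x,k)$. In the case $s_i=1$, the same assembly uses $\binom{n}{x}/\binom{n-1}{x}=n/(n-x)$ and yields $\alpha^+(x,k)$. This also shows that $\alpha_i(s)$ depends on $s$ only through $x$ and $s_i$.

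For the final identity, fix any $s$ with $M(s)=x$. Exactly $x$ coordinates of $s$ have $s_i=0$, each contributing $\alpha^-(x,k)$, and $n-x$ coordinates have $s_i=1$, each contributing $\alpha^+(x,k)$. Summing over $i$ and using \eqref{snk}, $\sum_i\alpha_i(s)=n-k$, gives $x\alpha^-(x,k)+(n-x)\alpha^+(x,k)=n-k$. At the endpoints $x=0$ or $x=n$, one of the two terms is absent, but it is multiplied by zero, so the identity still holds.

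The main obstacle is justifying the reduction to the $(n-1,k)$ model. One must check that, conditionally on $T_i=0$, the pair $(T_{-i},S_{-i})$ really has uniform prior and the same product test structure. This follows by restricting Lemma \ref{lemma: cond indep}, part 1, and \eqref{eq: psg} to $\gamma$ with $\gamma_i=0$. The remaining steps are routine algebra.
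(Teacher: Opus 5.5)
Your proof is correct and follows essentially the same route as the paper. Both arguments write $\alpha_i(s)$ via Bayes' rule with $p(s)=g_{n,k}(x)/\binom{n}{x}$ from Proposition~\ref{Lem1b}, and split off box $i$ using Lemma~\ref{lemma: cond indep}, part 2. Both then evaluate $P(S_{-i}=s_{-i}\mid T_i=0)$ in the symmetric $(n-1,k)$ model, simplify with the ratios $n/x$ and $n/(n-x)$, and obtain the final identity from \eqref{snk}.

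Your explicit check that, given $T_i=0$, the remaining allocation is uniform on $\Gamma_{n-1,k}$ with the same product test structure is a welcome justification of a reduction the paper only asserts.
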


Note that the formula for $\alpha_i(s)$ depends on $s$ only through $s_i$ and $x$. Thus,
\begin{equation*}
\alpha_i(s)=P(T_{i}=0|S=s)=P(T_{i}=0|S_i=s_{i},N=x).
\label{alx}
\end{equation*}
Some properties of functions of the form $\alpha^\pm(x,k)$ are discussed in \cite{son25}.

\begin{proof}
First note that, for $s\in\Sigma$, it is impossible to have $s_i=0$ and $M(s)=0$ or $s_i=1$ and $M(s)=n$. Next, let $S_{-i}$ and $s_{-1}$ denote, respectively, the vectors $S$ and $s$ without coordinate $i$. Applying Bayes' rule gives
\begin{eqnarray*}
\alpha_i(s) &=& P(T_i=0|S=s_i)= \frac{ P(S=s| T_i=0)P(T_i=0) }{P(S=s)} \\
&=& \frac{ P(S_{-i}=s_{-i}, S_i=s_i| T_i=0)P(T_i=0) }{P(S=s)} \\
&=& \frac{P(S_{-i}=s_{-i}| T_i=0) P(S_i=s_i| T_i=0)P(T_i=0) }{P(S=s)} \\
&=& \frac{n-k}{n}\binom{n}{x} \frac{P(S_i=s_i|T_i=0)P(S_{-i}=s_{-i}|T_i=0)}{g_{n,k}(x)},
\end{eqnarray*}
where the third line follows by Lemma \ref{lemma: cond indep} and the fourth follows by \eqref{eq: uncond prob Ti} and \eqref{psx}.

When $s_i=0$ we have $P(S_i=0|T_i=0)=b$ and $P(S_{-i}=s_{-i}|T_i=0)=g_{n-1,k}(x-1)/\binom{n-1}{x-1}$. To see that the latter holds, note that, in this case, $s_{-i}$ has $x-1$ zeros and thus we just need to find $p(s)$ in a situation with $n-1$ boxes, $x-1$ zeros, and $k$ objects, which we can do by applying \eqref{psx}. From here $\alpha_i(s) = \alpha^-(x,k)$ follows from the easily verified  the fact that $\frac{n-k}{n}{n\choose x}/{n-1\choose x-1}=\frac{n-k}{x}$.

When $s_i=1$ we have $P(S_i=1|T_i=0)=1-b$ and $P(S_{-i}=s_{-i}|T_i=0)=g_{n-1,k}(x)/\binom{n-1}{x}$, where the latter holds by applying \eqref{psx} to a situation with $n-1$ boxes, $x$ zeros, and $k$ objects. From here $\alpha_i(s) = \alpha^+(x,k)$ follows from the easily verified fact that $\frac{n-k}{n}{n\choose x}/{n-1\choose x}=\frac{n-k}{n-x}$.

The last equality, follows from \eqref{snk}, the fact that $\alpha_i$ depends on $i$ only through $s_i$, and the fact that $x$ of the $s_i$ equal $0$ and the remaining $n-x$ equal $1$. 
\end{proof}

We now summarize the solution in the symmetric case.

\begin{thrm}
In the symmetric case, the optimal  strategy for the DM to maximize the expected total reward is given by Algorithm 3.
\end{thrm}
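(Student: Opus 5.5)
The claim is that Algorithm 1, specialized to the symmetric setting, becomes Algorithm 3; we have not seen Algorithm 3 yet. I will assume it does the following. Given the signal $s$, compute $x=M(s)$. Set $\alpha_i=\alpha^-(x,k)$ when $s_i=0$ and $\alpha_i=\alpha^+(x,k)$ when $s_i=1$. Form $R_i=c_i-(c_i+d_i)\alpha_i$. Tag the positive $R_i$, keeping at most the $m$ largest. The proof then reduces to checking that the quantities used in this recipe agree with those of Algorithm 1.

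\textbf{Step 1: the theorem of Section~\ref{sec: GTF} applies.} The symmetric case is a special instance of the general TF problem, with $a_i=a$, $b_i=b$ and $\pi=\pi_*$. That theorem therefore applies verbatim, and the optimal rule is to compute $R_i(s,\pi_*)$ from \eqref{Ri} and tag as in Step~3 of Algorithm~1.

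The justification of that rule, which I would restate briefly, is linearity of conditional expectation. Given $S=s$, the expected reward of any tag set $I$ with $|I|\le m$ is $\sum_{i\in I}R_i(s,\pi)$. This sum is maximized by taking the boxes with positive $R_i$, capped at the $m$ largest. Because the choice can be made separately for each signal $s$, it also maximizes the unconditional reward \eqref{eq: expected reward}.

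\textbf{Step 2: Algorithm 3 computes the same posteriors.} It remains to show that the $\alpha_i(s,\pi_*)$ used in Algorithm~3 coincide with those of Algorithm~1. This is exactly Proposition~\ref{Lem2}: $\alpha_i(s)=\alpha^-(M(s),k)$ if $s_i=0$, and $\alpha_i(s)=\alpha^+(M(s),k)$ if $s_i=1$. Substituting into \eqref{Ri} gives identical $R_i$, hence an identical tag set up to tie-breaking. Any ties leave the expected reward unchanged, as was already noted for Algorithm~1.

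\textbf{Step 3: possible extra structure and the main obstacle.} If Algorithm~3 does more, such as sorting boxes within the groups $\{s_i=0\}$ and $\{s_i=1\}$ by their cost parameters, I would add the following observation. Within each group every box has the same $\alpha$, so $R_i$ depends only on $(c_i,d_i)$, and the ordering follows directly. The only genuinely substantive point is Proposition~\ref{Lem2}, which is already established; the expected obstacle is merely matching the exact form in which Algorithm~3 is stated.

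A related point concerns the functions $g_{n,k}$ and $g_{n-1,k}$ appearing in $\alpha^\pm$. These are Poisson-binomial pmfs computed through \eqref{con}, so they are available in polynomial time. This bears on efficiency only, not on optimality.
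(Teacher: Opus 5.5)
Your proposal is correct and follows the paper's reasoning. Algorithm 3 is exactly the recipe you guessed: Algorithm 1 with Step 1 replaced by the formulas $\alpha^\pm(M(s),k)$ of Proposition~\ref{Lem2}. Its optimality rests on the same two ingredients you use, namely the signal-by-signal linearity argument behind Algorithm 1 and the identification of the posteriors through Proposition~\ref{Lem2}. Your Step 3 contingency is unnecessary, since Algorithm 3 adds no further structure.
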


\noindent\textbf{Algorithm 3.} Given the uniform distribution $\pi=\pi^*$ and signal $S=s$.\\
\textbf{Step 1.} Use Proposition \ref{Lem2} to calculate $\alpha_i(s)$ for each $i=1,2,\dots,n$.\\
\textbf{Step 2.} Use \eqref{Ri} to calculate $R_i(s)$ for each $i=1,2,\dots,n$.\\
\textbf{Step 3.}  Let $\ell$ be the number of sites $i$ with $R_i(s)>0$. If $\ell\le m$, then tag all $\ell$ of these. If $\ell>m$, then tag the $m$ that have the largest $R_i(s)$. Ties can be broken arbitrarily.\\

We now give the expected reward in the symmetric case. Let $\mathcal I_s$ denote the set of all boxes that are tagged when following the optimal policy in the event that signal $S=s$ is observed. The expected reward from following the optimal policy is
\begin{eqnarray*}
R_{\mathrm{opt}}(\pi_*)  = \sum_{s\in\Sigma} \sum_{i\in\mathcal I_s} R_i(s) p(s) = \sum_{s\in\Sigma}\sum_{i\in\mathcal I_s} R_i(s)  \frac{g_{n,k}(M(s))}{{n\choose M(s)}}.
\end{eqnarray*}

\section{Extensions}\label{sec: Extensions}

In this section we consider several extensions of the TF problem.

\subsection{Extension I: Identify All Boxes}\label{sec: gen variant}

In this section we consider the problem, where we only get a reward if we correctly identify {\em all} boxes that contain objects and all that do not.  If even one box is not correctly identified, then we get nothing. In this case, we must find the $\gamma^*\in\Gamma$ that has the highest posterior probability $\theta(\gamma|s,\pi)$. We then tag all boxes that correspond to a $1$ in $\gamma^*$. Proposition \ref{Prop1} implies that
$$
\gamma^* = \underset{\gamma\in\Gamma}{\operatorname{argmax}}\ \pi(\gamma)p(s|\gamma) =\underset{\gamma\in\Gamma}{\operatorname{argmax}}\ \pi(\gamma)\prod_{i: \gamma_i=1}a_i^{s_i}(1-a_i)^{1-s_i}\prod_{i: \gamma_i=0}b_i^{1-s_i}(1-b_i)^{s_i}.
$$
When more than one $\gamma\in\Gamma$ maximizes the equation, we choose any one of them arbitrarily. Assume that we receive reward $C>0$ for correctly identifying the vector and $0$ otherwise. Conditionally, given signal $s$, our expected reward is $C \theta(\gamma^*|s,\pi)$ and, unconditionally, it is
$$
C \sum_{s\in\Sigma} p(s|\pi)\max_{\gamma\in\Gamma} \pi(\gamma)p(s|\gamma) = C \sum_{s\in\Sigma} \max_{\gamma\in\Gamma}\left( \pi(\gamma)\prod_{i: \gamma_i=1}a_i^{s_i}(1-a_i)^{1-s_i}\prod_{i: \gamma_i=0}b_i^{1-s_i}(1-b_i)^{s_i} \right).
$$

We now turn to the symmetric case, where the prior distribution is uniform, i.e., $\pi=\pi_*$, and all of the tests have the same parameters. In this case, when we observe $S=s$ with $M(s) =x$, the formula for $p(s|\gamma)=p(t,x)$ is as in \eqref{ptx}. Using this simplification and the fact that $\pi_*(\gamma)$ does not depend on $\gamma$, it follows that we should select any $\gamma^*$ such that $M_1(\gamma^*,s)=t^*$ with
$$
t^* = \underset{t\in\{(k-n+x)\vee0,\dots,x\wedge k\}}{\operatorname{argmax}}\ a^{k-t}(1-a)^{t}b^{x-t}(1-b)^{n+t-k-x}
$$
or equivalently
$$
t^* = \underset{t\in\{(k-n+x)\vee0,\dots,x\wedge k\}}{\operatorname{argmax}}\ \left(\frac{(1-a)(1-b)}{ab}\right)^{t},
$$
where the possible values of $t$ are determined by \eqref{eq: bound on N1}. So long as $1/2\le a,b<1$ with at least one of $a\ne1/2$ or $b\ne1/2$, the term in parentheses will belong to $(0,1)$ and hence we take $t^*=(k-n+x)\vee0$. In the case when $x\le n-k$, we take any $\gamma^*\in\Gamma$ that leads to no false zeros and when more than one $\gamma\in\Gamma$ satisfies this, we select any one of them arbitrarily. When $x>n-k$, false zeros are unavoidable, and we must select any $\gamma\in\Gamma$ that yields exactly $x-n+k$ false zeros. Note that, in the symmetric case, we only deal with $s$ and $\gamma$ through the quantities $x=M(s)$ and $t=M_1(\gamma,s)$.

\subsection{Extension II: Identify Exactly $m$ Boxes}\label{sec: gen variant gen}

In this section we consider the  problem, where we have $m\le k$ tags and we only get a reward if we correctly identify exactly $m$ objects. When $m=k$, this reduces to the problem studied in Section \ref{sec: gen variant}. 

Let $\mathcal H_m$ be the collection of all subsets of $\{1,2,\dots,n\}$ that have exactly $m$ elements. For each $\eta\in\mathcal H_m$, let
$$
\beta_\eta(s,\pi) = P\left(\bigcap_{i\in\eta}\left[T_i=1\right]\ \middle|\ S=s\right)
$$
be the conditional probability, given that we observed signal $S=s$, that there are objects in each location in $\eta$. By arguments similar to those in the proof of \eqref{eq: alpha i}, we have
$$
\beta_\eta(s,\pi) = \frac{\sum\limits_{\gamma\in\Gamma: \gamma_i=1\ \forall i\in\eta} \pi(\gamma)p(s|\gamma)}{\sum_{\gamma\in \Gamma}\pi(\gamma)p(s|\gamma)}.
$$
It follows that we should tag the boxes whose indices are in an $\eta$ that maximizes this quantity. Equivalently, we select the indices in
\begin{eqnarray}\label{eq:eta star}
\eta^*(s) = \underset{\eta\in\mathcal H_m}{\operatorname{argmax}}\  \sum\limits_{\gamma\in\Gamma: \gamma_i=1\ \forall i\in\eta} \pi(\gamma)p(s|\gamma).
\end{eqnarray}
Ties are broken arbitrarily. Note that, for $m=k$, this is exactly the strategy described in Section \ref{sec: gen variant}. Now, assume that we receive reward $C>0$ for correctly identifying $m$ objects and $0$ otherwise. Conditionally, given signal $s$, the expected reward of our strategy is $C \max_{\eta\in\mathcal H_m}  \beta_\eta(s,\pi)$ and, unconditionally, the expected reward is
$$
C \sum_{s\in\Sigma} p(s|\pi) \max_{\eta\in\mathcal H_m} \beta_\eta(s,\pi)  = C \sum_{s\in\Sigma}  \max_{\eta\in\mathcal H_m} \left( \sum\limits_{\gamma\in\Gamma: \gamma_i=1\ \forall i\in\eta} \pi(\gamma)p(s|\gamma) \right).
$$

One might think that a simple solution to this problem is to select the $m$ boxes with the lowest $\alpha_i$'s. However, this does not work as there may be two locations that both have small $\alpha$'s, but where the combination of the two is impossible. Consider the following example with $n=5$ locations and $k=2$ items. Assume that the tests are identical at all sites, i.e., that all of the $a_i$'s are the same and equal to some value $a$ and that all of the $b_i$'s are the same and equal to some value $b$. 
Let $\gamma(1)=(1,0,1,0,0)$,  $\gamma(2)=(0,1,0,1,0)$,  $\gamma(3)=(1,0, 0,0,1)$, $\gamma(4)=(0,1,0,0,1)$ and $\pi=(10,8,4,3,0,\dots,0)/25$. Assume that we observe $s=(1,1,1,1,1)$. Since the tests are identical, we have $p(s|\gamma(i))=a^2(1-b)^3$ for each $i=1,2,3,4$. From here we can calculate the vector of $\alpha_i$'s to be $\alpha= (11,14,15,17,18)/25$. While the lowest $\alpha$'s are for sites $1$ and $2$, we should not select $\gamma=(1,1,0,0,0)$ as it has probability $0$. 

We now turn to the symmetric case, where the prior distribution is uniform, i.e., $\pi=\pi_*$, and all of the tests have identical parameters.  In this case, \eqref{eq:eta star} is equivalent to
$$
\eta^*(s) = \underset{\eta\in\mathcal H_m}{\operatorname{argmax}}\  \sum\limits_{\gamma\in\Gamma: \gamma_i=1\ \forall i\in\eta} \left(\frac{(1-a)(1-b)}{ab}\right)^{M_1(\gamma,s)}.
$$
So long as $1/2\le a,b<1$ with at least one of $a\ne1/2$ or $b\ne1/2$, the term in parentheses belongs to $(0,1)$ and thus the maximum is attained when $M_1(\gamma,s)$ is minimized for all $\gamma$ in the appropriate set. It is easy to see that this leads to the following approach. Let $x=M(s)$ and note that $s$ has $n-x$ ones. If $n-x\ge m$ select any $m$ locations that have a $1$. If $n-x<m$, then select all locations that have a $1$ and then select any $m-n+x$ locations that have a $0$. As usual, ties are broken arbitrarily.

\subsection{Extension III: Random Number of Objects}\label{sec: gen variant rand k}

In this section we consider the case where the number of objects is random. Let $K$ be the random number of objects, let $\pi_K'$ be the distribution of $K$, and note that the support of $K$ is contained in $\{0,1,\dots,n\}$, where $n$ is the number of boxes. Objects are allocated to boxes using the following two-step procedure. First, we select $K$ using $\pi'_K$, and then, given $K=k$, we select an allocation $\gamma\in\Gamma_{n,k}$ of $k$ objects into $n$ boxes following some conditional distribution $\pi'(\cdot|k)$. Here, we take $\Gamma_{n,0}=\{(0,\dots,0)\}$ to be the set of all (there is only one) allocations of $0$ objects into $n$ boxes. 

Let $\Gamma'$ denote the set of all allocations of the objects that are possible from the two-step procedure described above and note that $\Gamma'$ consists of all vectors of the form $\gamma=(\gamma_{1},\gamma_{2},\dots,\gamma_{n})$ with $\gamma_j\in\{0,1\}$. In this case, we have $\Gamma'=\bigcup_{k=0}^n\Gamma_{n,k}=\Sigma$ and $M':=|\Gamma'|=L=2^n$. Let $\pi'$ be the distribution on $\Gamma'$ that results from the two-step procedure and note that, for $\gamma\in\Gamma'$, we have $\pi'(\gamma) = \pi'(\gamma|k)\pi'_K(k)$, where $k=\sum_{i=1}^n \gamma_i$ is the number of $1$'s in $\gamma$. All of the results in Section \ref{sec: GTF} still hold so long as we replace  $\Gamma = \Gamma_{n,k}$ with $\Gamma =\Gamma'$ and we replace $M$ with $M'$. The one exception is that \eqref{snk} becomes
$$
\sum_{i=1}^n\alpha_i(s,\pi') = n-\mathrm E[K|\pi'].
$$ 
In particular, this means that we can still use Algorithms 1 and 2 (with appropriate modifications) to find the optimal strategy.

We now turn to the case where $\pi'(\cdot|k)$ is the uniform distribution on $\Gamma_{n,k}$, for each $k=0,1,\dots,n$, while we allow $\pi'_K$ to be arbitrary. We further assume that  all of the tests have the same parameters. In this case, given $S=s$ the posterior distributions of $K$ is as follows. For any $s\in \Sigma$ with $M(s)=x$, Bayes' rule combined with \eqref{psx} gives
\begin{eqnarray*}
\pi_K'(k|s) :=P(K=k|S=s) = \frac{P(S=s|K=k)P(K=k)}{\sum_{j=0}^n P(S=s|K=j)P(K=j)}= \frac{g_{n,k}(x)\pi'_K(k)}{\sum_{j=0}^n g_{n,j}(x)\pi'_K(j)}.
\end{eqnarray*}

\begin{prop}
Fix $i\in\{1,2,\dots,n\}$, $s\in \Sigma$, and let $x=M(s)$. If $s_i=0$, then $x\in\{1,2,\dots,n\}$ and
$$
\alpha_i(s) = \mathrm E[\alpha^-(x,K)|S=s] = \sum_{k=0}^n \alpha^-(x,k)\pi_K'(k|s).
$$
If $s_i=1$, then $x\in\{0,1,\dots,n-1\}$ and
$$
\alpha_i(s) = \mathrm E[\alpha^+(x,K)|S=s] = \sum_{k=0}^n \alpha^+(x,k)\pi_K'(k|s).
$$
\end{prop}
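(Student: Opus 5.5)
The plan is to condition on $K$ and use the law of total probability, so that the random-$k$ problem reduces to the fixed-$k$ symmetric problem of Proposition \ref{Lem2}. For fixed $s$ with $M(s)=x$, I would write
$$
\alpha_i(s)=P(T_i=0\mid S=s)=\sum_{k=0}^n P(T_i=0\mid S=s,K=k)\,P(K=k\mid S=s),
$$
where the sum runs over those $k$ with $\pi_K'(k|s)>0$. Since $p(s)>0$ when $a,b<1$, and otherwise we restrict to signals of positive probability, this amounts to restricting to $k$ with $\pi'_K(k)>0$. The second factor is exactly $\pi_K'(k|s)$, which was computed just before the statement via Bayes' rule and \eqref{psx}.

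The key step is to identify $P(T_i=0\mid S=s,K=k)$ with $\alpha^{\mp}(x,k)$. Conditionally on $K=k$, the allocation $T$ is uniform on $\Gamma_{n,k}$ by assumption. Given $T$, the signal $S$ is still generated by independent tests with the same parameters $a,b$; that is, $P(S=s\mid T=\gamma,K=k)=p(s|\gamma)$, since $K$ is a function of $T$. Hence, under $P(\cdot\mid K=k)$, the pair $(T,S)$ has exactly the joint law of the symmetric TF model with $k$ objects. Proposition \ref{Lem2} then applies verbatim: the conditional probability equals $\alpha^-(x,k)$ if $s_i=0$ and $\alpha^+(x,k)$ if $s_i=1$. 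Substituting gives the stated sums. The expectation form $\mathrm E[\alpha^\pm(x,K)\mid S=s]$ is just a rewriting, because the posterior pmf of $K$ given $S=s$ is $\pi_K'(\cdot|s)$. The ranges of $x$ are handled as before: $s_i=0$ forces $x\ge1$, and $s_i=1$ forces $x\le n-1$.

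The main obstacle is the edge values of $k$, namely $k=0$ and $k=n$, together with the degenerate cases where $g_{n,k}(x)=0$. Proposition \ref{Lem2} was derived implicitly for $1\le k\le n-1$, and its formula contains the ratio $g_{n-1,k}(\cdot)/g_{n,k}(x)$, with $g_{n-1,n}$ undefined when $k=n$.

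I would treat these cases separately. For $k=n$, we have $T_i=1$ surely, so the conditional probability is $0$; this matches the formula because of the factor $n-k=0$, provided we adopt the convention that this term is $0$. For $k=0$, $T_i=0$ surely. Here I would check directly that $\alpha^-(x,0)=b\frac{n}{x}\frac{g_{n-1,0}(x-1)}{g_{n,0}(x)}=1$, using $g_{n,0}=\mathrm{Bin}(n,b)$. Finally, any $k$ with $g_{n,k}(x)=0$ has $\pi_K'(k|s)=0$, so it contributes nothing regardless of how $\alpha^\pm(x,k)$ is defined.
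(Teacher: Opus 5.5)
Your proposal is correct and is essentially the paper's proof. Both condition on $K$ via the law of total probability, identify $P(T_i=0\mid S=s,K=k)$ with $\alpha^{\mp}(x,k)$ by applying Proposition~\ref{Lem2} to the fixed-$k$ symmetric model, and recognize the weights as $\pi_K'(k|s)$. Your separate treatment of $k=0$ and $k=n$ (where $g_{n-1,n}$ is undefined and a convention is needed), and of those $k$ with $g_{n,k}(x)=0$, is extra rigor that the paper's two-line proof leaves implicit.
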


\begin{proof}
For the first part, we have
\begin{eqnarray*}
\alpha_i(s) &=& P(T_i=0|S=s) =\sum_{k=0}^n  P(T_i=0| K=k,S=s) P(K=k|S=s) \\
&=&  \sum_{k=0}^n \alpha^-(x,k)\pi'(k|s),
\end{eqnarray*}
where the last equality follows by Proposition \ref{Lem2}. The second part can be shown in a similar manner.
\end{proof}

It may be interesting to note that, when $\pi'_K$ is the uniform distribution on $\{0,1,2,\dots,n\}$ and $\pi'(\cdot|k)$ is uniform on $\Gamma_{n,k}$ for each $k$, the distribution $\pi'$ is {\em not} uniform on $\Gamma'$ since the cardinality of $\Gamma_{n,k}$ depends on $k$. To get the uniform distribution $\pi'_*$ on $\Gamma'$, we must take
$$
\pi'_K(k) = \frac{{n\choose k}}{2^n}, \ \ \ k=0,1,2,\dots,n
$$
and then let $\pi'(\cdot|k)$ be uniform for each $k$. 

When $\pi'=\pi'_*$, the situation becomes quite simple. In this case, we place equal probability on each of the $2^n$ possible arrangements of the objects into $n$ boxes. Thus, for each $\gamma\in\Gamma'$ we have $\pi'_*(\gamma)=2^{-n}$. It is readily checked that, in this case, $T_1,T_2,\dots,T_n$ are iid random variables with
$$
P(T_i=0) = 1-P(T_i=1) = \frac{1}{2}.
$$
From here it can be shown that the $S_i$'s are independent as well. By the law of total probability, we have
$$
P(S_i=s_i) = \frac{1}{2}P(S_i=s_i|T_i=0)+\frac{1}{2}P(S_i=s_i|T_i=1).
$$
When $s_i=0$ this reduces to $(b+1-a)/2$ and when $s_i=1$ it reduces to $(1-b+a)/2$. By Bayes' theorem it follows that for any $s\in\Sigma$
\begin{eqnarray*}
\alpha_i(s) = P(T_i=0|S_i=s_i) = \frac{P(S_i=s_i|T_i=0)}{P(S_i=s_i|T_i=0)+P(S_i=s_i|T_i=1)}.
\end{eqnarray*}
When $s_i=0$ this reduces to
$$
\alpha_i(s) =\frac{b}{b+1-a}
$$
and when $s_i=1$ it reduces to
$$
\alpha_i(s) =\frac{1-b}{1-b+a}.
$$
Now, let $\mathcal I_s$ denote the set of all boxes that are tagged under the optimal policy when $\pi'=\pi'_*$ and the observed signal is $S=s$. The expected reward from following this policy is
\begin{eqnarray*}
\sum_{s\in\Sigma} \sum_{i\in\mathcal I_s} R_i(s) p(s) = \sum_{s\in\Sigma} \sum_{i\in\mathcal I_s} R_i(s) \prod_{j=1}^n P(S_i=s_i).
\end{eqnarray*}

\section{Conclusions}\label{sec:con}

In this work, we introduced the Test and Find (TF) model and derived an explicit solution. Since this solution can become computationally intensive at larger scales, we developed more efficient results for the symmetric case. In addition, we presented explicit solutions for three practically important extensions of the TF problem. There are a number of potential directions for future work. We briefly discuss a few:
\begin{itemize}
\item An interesting version of  the TF problem could be one where there are several subgroups. Within each subgroup the testing parameters and the prior probabilities are the same, but these are allowed to vary between different subgroups. We expect that one should be able to get computationally efficient results, even for larger-scale problems, so long as the number of subgroups is relatively small. This situation can be seen as an intermediate point between the general TF model and the symmetric case.
\item One could consider the situation where multiple objects may be placed in the same box and the DM needs to guess not only which boxes have objects, but the number of objects in each box. Here, the testing could give information about the number of objects in the box and the probabilities of false positives and false negatives could depend on this number.
\item It would be interesting to study the case where there are  three kinds of tags: ``object is here,'' ``object is not here,'' and ``not sure," and with a correspondingly more general cost structure. Going further, there may be more types of tags, each reflecting a different level of confidence. 
\item One could consider a layered TF problem, where each box contains multiple sub-boxes and we must identify not just the box, but the sub-box that contains the object. As a practical example, consider searching for a particular criminal in a city. First, based on a variety of information, the police must decide on which neighborhoods to focus on, and then they need to use potentially noisier (i.e., higher false positive and false negative rates) information to decide on which specific blocks in those neighborhoods to target.
\item Another extension could be that, after selecting the boxes, one would still need to search to see if the items are in these boxes. If the item is not found after some time, it may suggest that resources would be better spent searching boxes that were not originally selected. This would be a dynamic version of the TF problem in the spirit of the papers  \cite{Arkin:1964a} and \cite{Arkin:1964}. 
\item If some parameters are unknown, one can consider an iterated version of the TF problem, where the game is repeated multiple times and the DM aims to learn about the parameters while simultaneously trying to maximize the reward. Such problems are closely related to the classical multi-armed bandit problem, see, e.g., \cite{prso90} for a monographic treatment.
\end{itemize}

\end{document}